\newtheorem{proposition}{Proposition}
\newtheorem{lemma}{Lemma}
\newtheorem{corollary}{Corollary}
\newtheorem{theorem}{Theorem}
\theoremstyle{definition}
\newtheorem{notation}{Notation}
\theoremstyle{remark}
\newtheorem{remark}{Remark}
\title{Some results on continuous deformed free group factors}
\author{Adam Merberg}
\address{Department of Mathematics,
  University of California,
  Berkeley, CA, USA 94720}
\email{amerberg@math.berkeley.edu}
\begin{document}
\begin{abstract}
We construct a Fock space associated to a symmetric function $Q:U\times U \to (-1,1)$,
where $U$ is a nonempty open subset of $\mathbb R^j$ for some $j$. Namely, we will have
operator-valued distributions $a(x)$ and $a^+(y)$ satisfying
\begin{align*}
 a(x)a^+(y)-Q(x,y)a^+(y)a(x)=\delta(x-y).
\end{align*}
Analogous to the $q_{ij}$-Fock space of Bozejko and Speicher \cite{BS94}, we have field operators
arising as the sum of the creation and annihilation operators. These operators
generate a von Neumann algebra analogous to the free group factors, and
are factors which do not have property $\Gamma$. It was pointed out to us by an anonymous referee
that this is a special case of a theorem of Krolak \cite{Krolak}. 
\end{abstract}
\maketitle
\section{Introduction}\label{sec:intro}
In the study of operator algebras, much attention has been paid to the
canonical commutation relations (CCR) and the canonical anti-commutations (CAR).
Bo{\.z}ejko and Speicher \cite{BS91} considered an interpolation between these
relations. Specifically, for $q\in [-1,1]$, they constructed creation operators
$c^+(f)$ and annihilation operators $c(f)$ on a $q$-twisted Fock space $\mathcal
F_q(\mathcal H)$ satisfying the relations
\begin{equation*}
 c(f)c^+(g)-qc^+(g)c(f)=\left<f,g\right>\cdot 1.
\end{equation*}
In the $q=0$ case, these are the creation and annihilation operators on the full
Fock space.

It was shown by Voiculescu \cite{Voiculescu} that for a Hilbert
space of dimension $d\in \{1,2,\ldots,\infty\}$, the Hermitian parts of the
creation operators on the free Fock space
generate von Neumann algebras isomorphic to the free group
factor on $d$ generators. Thus, we can view the algebras $\Gamma_q(\mathcal
H):=\{c(f)+c^+(f): f\in\mathcal H\}''$ as $q$-deformations of the free group
factors. 

Various factoriality theorems have been proven for these
algebras. First, Bo{\.z}ejko and Speicher \cite{BS94} showed that these are
factors when $\dim \mathcal H$ is infinite. {\'S}niady \cite{Sniady} subsequently
showed that $\Gamma_q(\mathcal H)$ is a factor for $\dim\mathcal H$ sufficiently
large but finite. Ricard \cite{Ricard} showed that in fact $\Gamma_q(\mathcal
H)$ is a factor for $\dim \mathcal H\ge 2$.

More general deformations of the free group factors have also been considered.
For $\mathcal H$ a Hilbert space with basis $\{e_i\}_{i\in I}$, Bo{\.z}ejko and
Speicher \cite{BS94} constructed a solution to the $q_{ij}$-relations 
\begin{equation*}
 c(e_i)c^+(e_j)-q_{ij}c^+(e_j)c(e_i)=\delta_{ij},
\end{equation*}
for $q_{ij}\in [-1,1]$ as well as a further generalization of the relations arising from a contraction
$T\in\mathcal B(\mathcal H)$ satisfying the braid relation (or Yang-Baxter
relation) given by
\begin{equation*}
 (1\otimes T)(T\otimes 1)(1\otimes T)=(T\otimes 1)(1\otimes T)(T\otimes 1).
\end{equation*}

Kr{\.o}lak \cite{Krolak} proved that if $\|T\|<1$, which in the $q_{ij}$ case
corresponds to the condition $\sup\{|q_{ij}|:i,j\in I\}<1$, the resulting 
von Neumann algebra is a factor for $\dim \mathcal H$ sufficiently large.

In another direction, Liguouri and Mintchev \cite{LM} and  
Bo{\.z}ejko, Lytvynov, and Wysoczanski \cite{BLW} have considered 
creation and annihilation operators on a Fock space arising from a continuous
commutation relation associated with a Hermitian function
$Q$ from $\mathbb R^j\times \mathbb R^j$ (or some more general space) to the unit circle. This construction
also involves additional commutation relations on the creation operators, and includes the
anyons as a special case.

Here we will consider a continuous $Q$-commutation relation arising from
a function taking values in $(-1,1)$. Before we state the problem more explicitly,
we introduce some notations which will be used throughout the paper.

\begin{notation}
Let $U$ be a nonempty open subset of $\mathbb R^j$ for some integer $j\ge 1$. 
We also fix $Q\in C(U\times U)$, the space of continuous functions on $U\times U$.
Further assume that
$q:=\sup\{|Q(x,y)|: x,y\in U\} < 1$ and that $Q$ is a symmetric function, that is $Q(x,y) =
Q(y,x)$. Also define $\mathcal H=L^2(U)$.
\end{notation}

For points $x,y\in U$, we wish to consider, at least
heuristically, infinitesimal creation and annihilation operators on a 
$Q$-twisted Fock space satisfying the $Q$-commutation relation
\begin{equation}
 a(x)a^+(y)-Q(x,y)a^+(y)a(x) = \delta(x-y) \cdot 1, \label{eqn:qcr}
\end{equation}
where $\delta$ is the usual Dirac $\delta$, whence
\begin{equation*}
 \int\int \delta(x-y)f(x,y)\ dx\ dy = \int f(y,y)\ dy.
\end{equation*}
Rigorously, this relation should be understood
as a statement about operator-valued distributions, which makes sense upon smearing
with a test function and considering the resulting quadratic forms. The meaning
will be explained further in Section \ref{sec:fock}.

The operator-valued distributions $a^+(x)$ and $a(x)$ will give rise to 
creation and annihilation operators $a(f)$ and $a^+(f)$ on a $Q$-deformed
Fock space $\mathcal F_Q(\mathcal H)$. We will use these to define a $Q$-deformed
field operator $w(f)=a(f)+a^+(f)$ and the von Neumann algebra $\Gamma_Q(\mathcal H)$
generated by operators of this type.

This paper has four sections, not including this introduction. 
Section \ref{sec:fock} will present the construction of a deformed $Q$-Fock 
space with creation and annihilation operators realizing the $Q$-commutation 
relation. In Section \ref{sec:vNa}, we 
will discuss basic properties of the von Neumann algebra
generated by the field operators on this Fock space. In Section \ref{sec:discretization}, we will
show that the field operators arise as a limit in distribution of operators on discrete $q_{ij}$-Fock
spaces considered by Bo{\.z}ejko and Speicher in \cite{BS94}. In
Section \ref{sec:factoriality}, we will show that the von Neumann algebra generated by these operators is a factor.
\subsection*{Acknowledgments}
While working on this paper, the author was supported in part by a National Science Foundation (NSF) Graduate Research Fellowship. He was also supported in part by funds from NSF grant DMS-1001881. The author also benefited from attending the program ``Bialgebras in Free Probability'' at the Erwin Schr{\"o}dinger Institute in the Spring of 2011. His travel was supported by NSF grant DMS-1101630.

The author would like to thank Dan-Virgil Voiculescu for many enlightening conversations, Michael Hartglass for suggesting simplifications to the proof of Proposition \ref{prop:contractions}, and an anonymous referee for offering several corrections and for pointing out that several results are special cases of results of Krolak \cite{Krolak, Krolak2000}.
\section{The $Q$-Fock space}\label{sec:fock}
 We will construct our $Q$-Fock space by defining a deformed inner product on 
the algebraic Fock space. Fix $n$ and define for $1\le i\le n-1$ the operator $T_i^{(n)}$ on $\mathcal H^{\otimes n}$ by
\begin{equation*}
 T_i^{(n)}f(x_1,\ldots, x_n) = Q(x_i, x_{i+1})f(x_1,\ldots, x_{i-1}, x_{i+1}, x_i, x_{i+2}, \ldots, x_n).
\end{equation*}
Evidently $T_i$ is self-adjoint and bounded with norm at most 
$q:=\sup_{x,y}|Q(x,y)| < 1$. It is easily verified that
\begin{equation}\label{eqn:generalizedbraid}
T_i^{(n)}T_{j}^{(n)}= T_{j}^{(n)}T_i^{(n)}\ \text{for $|i-j|\ge 2$} \quad\text{and}\quad T_i^{(n)}T_{i+1}^{(n)}T_i^{(n)}=T_{i+1}^{(n)}T_i^{(n)}T_{i+1}^{(n)}.
\end{equation}
These relations are known as the Yang-Baxter relations, or sometimes the braid relations.
Now let $S_n$ denote the symmetric group on $n$ elements and
for $i=1,\ldots, n-1$ let $\pi_i$ be the permutation transposing $i$ and $i+1$ and
fixing all other elements. We define the map $\phi_n$ first on the $\pi_i$ by 
$\phi_n(\pi_i)=T_i^{(n)}$ and then on all of $S_n$ by quasi-multiplicative extension. This
means that if $\pi=\pi_{i_1}\cdots \pi_{i_k}$ is a decomposition of $\pi$ into a minimal
number of the $\pi_i$ then we define
\begin{equation*}
 \phi_n(\pi) = \phi_n(\pi_{i_1})\cdots \phi_n(\pi_{i_k})= T_{i_1}^{(n)}\cdots T_{i_k}^{(n)}.
\end{equation*}
That this definition does not depend on our choice of minimal length decompositions for
$\pi$ is a consequence of the fact that the $T_i^{(n)}$ satisfy \eqref{eqn:generalizedbraid}.
It follows from this definition that $\phi_n(\sigma_1\sigma_2)=\phi_n(\sigma_1)\phi_n(\sigma_2)$ 
whenever $|\sigma_1|+|\sigma_2|=|\sigma_1\sigma_2|$. Here $|\sigma_k|$ denotes
the number of inversions of the permutation $\sigma_k$. That is,
\begin{equation*}
 |\sigma_k|= \left|\left\{(i,j): 1\le i< j \le n, \sigma_k(i)> \sigma_k(j)\right\}\right|.
\end{equation*}
Equivalently, $|\sigma_k|$ is the length of the shortest word for $\sigma_k$ as a product
of the fundamental transpositions.

We now define the operator $P_Q^{(n)}\in\mathcal B(\mathcal H^{\otimes n})$ by 
\begin{equation*}
 P_Q^{(n)}\ = \ \sum_{\sigma\in S_n}\phi_n(\sigma).
\end{equation*}
By Theorem 2.3 of \cite{BS94}, the operator $P_Q^{(n)}$ is strictly positive.

Let $\mathcal F_{\text{alg}}(\mathcal H)$ be the algebraic Fock space on $\mathcal H$,
\begin{equation*}
 \mathcal F_{\text{alg}}(\mathcal H):=\bigoplus_{n=0}^\infty \mathcal{H}^{\otimes n},
\end{equation*}
where the direct sum is the algebraic direct sum, so that only finite sums are permitted. Here $\mathcal{H}^{\otimes 0}$ 
is a one-dimensional vector space generated by a distinguished unit vector $\Omega$,
which we call the vacuum vector.

The $Q$-inner products on the $\mathcal{H}^{\otimes n}$ naturally define a $Q$-inner product
on $\mathcal F_{\text{alg}}(\mathcal H)$ by sesquilinear extension of
\begin{equation*}
 \left<f,g\right>_Q=
\begin{cases}
\left<f, P_Q^{(n)}g\right>_0,&m=n,\\
0,&m\ne n,
\end{cases}
\end{equation*}
for $f\in \mathcal H^{\otimes n}$ and $g\in \mathcal H^{\otimes m}$. Here, $\left<\cdot,\cdot\right>_0$ denotes
the usual inner product on $\mathcal H^{\otimes n}$.
We now define the $Q$-Fock space $\mathcal F_Q(\mathcal H)$ as the completion of $\mathcal F_{\text{alg}}(\mathcal H)$
with respect to the $Q$-inner product. 

We are now almost ready to introduce the $Q$-creation and annihilation operators.
We will define these in terms of the free creation and annihilation operators.
For $f\in \mathcal H$, we define the free creation operator $l^+(h)$ on $\mathcal F_{\text{alg}}(\mathcal H)$ by 
\begin{equation*}
 l^+(h) f\ =\ h\otimes f
\end{equation*}
for $f\in \mathcal H^{\otimes n}$, where we adopt the convention for the $n=0$
case that $h\otimes \Omega = h$. We define $l(h)$ to be the free annihilation 
operator, given by
\begin{equation*}
 (l(h)f)(x_1,\ldots, x_{n-1}):=\int_{U} \overline{h(y)}f(y, x_1,\ldots, x_{n-1})\ dy.
\end{equation*}

We now define for $h\in\mathcal H$ the $Q$-creation 
operator $a^+(h)$ and the $Q$-annihilation operator $a(h)$ by 
\begin{equation*}
 a^+(h):= l^+(h)\quad\text{and}\quad a(h):= l(h)R_Q^{(n)}
\end{equation*}
on $\mathcal H^{\otimes n}$ for $n>0$, where
\begin{equation}
 R_Q^{(n)}:=1+T_1^{(n)}+T_1^{(n)}T_2^{(n)}+\cdots+T_1^{(n)}\cdots T_{n-2}^{(n)}T_{n-1}^{(n)}.\label{eqn:rdef}
\end{equation}
By writing each permutation $\sigma\in S_n$ as the product of an element of $S_1\times S_{n-1}$
and the minimal length representative of the coset of $\sigma$, we can
show that
\begin{equation}\label{eqn:oprelation}
 P_Q^{(n+1)} = (1\otimes P_Q^{(n)})R_Q^{(n+1)}.
\end{equation}
One can analogously define $Q$-deformed right creation and annihilation operators.
In general, we will state our results in terms of the left side versions, but
analogous results hold on the right side with the same proofs, and we will occasionally need
to make use of these analogs.

It was pointed out to us by an anonymous referee that the following is actually a special case of Theorem 3.1 of \cite{Krolak2000}.
\begin{proposition}
 For $h\in \mathcal H$, the operators $a(h)$ and $a^+(h)$ are adjoints
with respect to the $Q$-norm. Furthermore, for $h\in \mathcal H$, 
\begin{equation*}
 \|a^+(h)\|\le \|h\|\frac{1}{\sqrt{1-q}}.
\end{equation*}
In particular, $a^+(h)$ and $a(h)$ extend to bounded operators on $\mathcal F_Q(\mathcal H)$.
\end{proposition}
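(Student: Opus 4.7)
The plan has two main steps. For adjointness, I would fix $f\in\mathcal H^{\otimes n}$ and $g\in\mathcal H^{\otimes (n+1)}$ and unpack
\begin{equation*}
\langle a^+(h)f,g\rangle_Q \;=\; \langle h\otimes f,\, P_Q^{(n+1)} g\rangle_0
\end{equation*}
using the definition of the $Q$-inner product. Substituting the factorization \eqref{eqn:oprelation} and using that $l(h)$ is the adjoint of $l^+(h)$ with respect to the free inner product reduces the expression to $\langle f,\, l(h)(1\otimes P_Q^{(n)})R_Q^{(n+1)}g\rangle_0$. The key observation is that $l(h)$ acts only on the first tensor factor, while $1\otimes P_Q^{(n)}$ is the identity there; hence the two commute, and in fact $l(h)(1\otimes P_Q^{(n)}) = P_Q^{(n)}\, l(h)$. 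Pulling the resulting $P_Q^{(n)}$ back into the $Q$-inner product identifies the expression with $\langle f, l(h)R_Q^{(n+1)}g\rangle_Q = \langle f, a(h)g\rangle_Q$.

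For the norm bound, I would first observe that each $T_i^{(n+1)}$ has operator norm at most $q$, so from \eqref{eqn:rdef} and the triangle inequality, $\|R_Q^{(n+1)}\| \le \sum_{k=0}^n q^k \le 1/(1-q)$ on $\mathcal H^{\otimes(n+1)}$ with the free inner product. Write $A = 1\otimes P_Q^{(n)}$, $R = R_Q^{(n+1)}$, and $B = P_Q^{(n+1)}$; then $A$ and $B$ are positive self-adjoint (the latter by Theorem 2.3 of \cite{BS94}), and \eqref{eqn:oprelation} reads $B = AR$. Taking adjoints and using $B=B^*$ also gives $B = R^*A$, so
\begin{equation*}
B^2 \;=\; (AR)(R^*A) \;=\; A R R^* A \;\le\; \|R\|^2 A^2 \;\le\; \tfrac{1}{(1-q)^2}\,A^2.
\end{equation*}
By operator monotonicity of the square root, this becomes the operator inequality $B \le (1-q)^{-1}A$ on $\mathcal H^{\otimes(n+1)}$. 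Applied to $h\otimes f$ with $f\in\mathcal H^{\otimes n}$, and using $\langle h\otimes f,A(h\otimes f)\rangle_0 = \|h\|^2\|f\|_Q^2$, this yields
\begin{equation*}
\|a^+(h)f\|_Q^2 \;=\; \langle h\otimes f,\, B(h\otimes f)\rangle_0 \;\le\; \tfrac{1}{1-q}\,\|h\|^2\,\|f\|_Q^2,
\end{equation*}
which is the stated estimate on each $\mathcal H^{\otimes n}$, so $a^+(h)$ extends boundedly to $\mathcal F_Q(\mathcal H)$, and $a(h)$ does too by the adjointness established above.

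The main obstacle is upgrading the scalar norm bound $\|R_Q^{(n+1)}\|\le 1/(1-q)$ to the operator inequality $P_Q^{(n+1)} \le (1-q)^{-1}(1\otimes P_Q^{(n)})$: just comparing norms is insufficient because $P_Q^{(n)}$ may have norm much larger than one, and $R_Q^{(n+1)}$ is not positive so the product $(1\otimes P_Q^{(n)})R_Q^{(n+1)}$ is not obviously controlled in the positive-operator ordering by $(1\otimes P_Q^{(n)})$. The trick is to exploit both factorizations $B = AR = R^*A$ to rewrite $B^2 = ARR^*A$, at which point the scalar bound on $RR^*$ and operator monotonicity of the square root deliver the operator ordering we need.
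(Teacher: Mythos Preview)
Your proposal is correct and follows essentially the same route as the paper: the adjointness argument via the factorization \eqref{eqn:oprelation} together with $l(h)(1\otimes P_Q^{(n)})=P_Q^{(n)}l(h)$, and the norm bound via $B^2=ARR^*A\le (1-q)^{-2}A^2$ followed by operator monotonicity of the square root, match the paper's proof line for line. The only cosmetic difference is that the paper derives the commutation relation $P_Q^{(n)}l(h)=l(h)(1\otimes P_Q^{(n)})$ from the intertwining identity $l^+(h)T_i^{(n)}=T_{i+1}^{(n+1)}l^+(h)$, whereas you argue it directly from the tensor structure; both are equally valid.
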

\begin{proof}
The proof of this theorem is very similar to that of Theorem 3.1 in \cite{BS94}.
We will first show that $a(h)$ and $a^+(h)$ are adjoints with respect to
the $Q$ inner product. The definitions imply that
\begin{equation*}
 l^+(h)T_i^{(n)}=T_{i+1}^{(n+1)}l^+(h),
\end{equation*}
whence it follows that 
\begin{equation*}
 l^+(h)P_Q^{(n)}=(1\otimes P_Q^{(n)})l^+(h)\quad \text{and} \quad P_Q^{(n)} l(h)=l(h)(1\otimes P_Q^{(n)}).
\end{equation*}
By applying \eqref{eqn:oprelation}, for $f\in \mathcal H^{\otimes n}$ 
\begin{align*}
 \left<a^+(h)f,g\right>_Q& = \left<a^+(h)f, P_Q^{(n+1)}g\right>_0\\
&=\left<f, l(h)P_Q^{(n+1)}g\right>_0\\
&=\left<f, l(h)\left(1\otimes P_Q^{(n)}\right)R_Q^{(n+1)}g\right>_0\\
&=\left<f, P_Q^{(n)}l(h)R_Q^{(n+1)}g\right>_0\\
&=\left<f, P_Q^{(n)}a(h)g\right>_0\\
&=\left<f, a(h)g\right>_Q.
\end{align*}
This proves that $a(h)$ and $a^+(h)$ are adjoints with respect to the $Q$-inner product.

We now prove the bound on $\|a^+(h)\|$. Since $\|T_i^{(n)}\|\le q$ for each $i$, 
\begin{equation*}
 \left\|R_Q^{(n)}\right\|\le 1+q+q^2+\cdots + q^{n-1}\le \frac{1}{1-q}.
\end{equation*}
Thus,
\begin{align*}
 P_Q^{(n+1)}P_Q^{(n+1)} &= P_Q^{(n+1)}\left(P_Q^{(n+1)}\right)^*\\ 
&=\left(1\otimes P_Q^{(n)}\right)R_Q^{(n+1)}\left(R_Q^{(n+1)}\right)^*\left(1\otimes P_Q^{(n)}\right)\\
&\le \frac{1}{(1-q)^2}\left(1\otimes P_Q^{(n)}\right)\left(1\otimes P_Q^{(n)}\right).
\end{align*}
Since $1\otimes P_Q^{(n)}$ and $P_Q^{(n+1)}$ are positive operators, it follows that
\begin{equation*}
 P_Q^{(n+1)}\le \frac{1}{1-q}\left(1\otimes P_Q^{(n)}\right).
\end{equation*}
Therefore, for $f\in \mathcal H^{\otimes n}$,
\begin{align*}
\left\|a^+(h)f\right\|^2 &= \left<a^+(h)f,a^+(h)f\right>_Q\\
&=  \left<h\otimes f,h\otimes f\right>_Q\\
&=  \left<h\otimes f, P_Q^{(n+1)}(h\otimes f)\right>_0\\
&\le \frac{1}{1-q} \left<h\otimes f, 1\otimes P_Q^{(n)}(h\otimes f)\right>_0\\
&\le \frac{1}{1-q} \left<h, h\right>_Q\left<f, P_Q^{(n)} f\right>_0\\
&\le \frac{1}{1-q} \left<h, h\right>\left<f, f\right>_Q\\
&\le \frac{1}{1-q} \|h\|^2 \|f\|_Q^2.
\end{align*}
\end{proof}
We can represent an element $f$ of the Fock space $\mathcal F_Q(\mathcal H)$ as
a sequence of functions $(f^{(0)}, f^{(1)}, \ldots)$, with $f^{(n)}\in \mathcal H^{\otimes n}$
and 
\begin{equation*}
 \sum_{n=0}^\infty \left\|f^{(n)}\right\|_Q^2 < \infty.
\end{equation*}
 
We are now ready to define the operator-valued distributions $a(x)$ and $a^+(x)$.
For $f\in \mathcal H^{\otimes n}$, we define these by
\begin{align*}
 [a(x)f](x_1,\ldots, x_{n-1})&= \left(R_Q^{(n+1)}f^{(n+1)}\right)(x, x_1,\ldots, x_{n-1})\\
 [a^+(x)f](x_1,\ldots, x_{n+1})&= \delta(x-x_1)f^{(n-1)}(x_2,\ldots, x_{n+1}). 
\end{align*}
These definitions, of course, makes no sense as functions, but should be interpreted 
as distributions on $C_c^\infty(U)$. It is an immediate consequence of the definitions that
\begin{equation*}
 a(h)=\int_U \overline{h(x)}a(x)\ dx\quad\text{and}\quad  a^+(h)=\int_U h(x)a^+(x)\ dx,
\end{equation*}
for functions $h\in C_c^\infty(U)$. These relations are understood rigorously in terms of
the corresponding quadratic forms. That is, for $f\in \mathcal H^{\otimes n}$
and $g\in\mathcal H^{\otimes (n-1)}$,
\begin{equation*}
\begin{split}
  \left<f, a^+(h)g\right>_Q&= \int_U h(x)\left< f, a^+(x)g\right>_Q\ dx\\
&=\int_U h(x)\int_{U^n}\left(P_Q^{(n)}f\right)(x_1,\ldots, x_{n})\delta(x-x_1)\overline{g(x_2,\ldots, x_n)} dx_1\ldots dx_n dx,
\end{split}
\end{equation*}
and similarly for $a(h)$:
\begin{equation*}
 \begin{split}
  \left<g, a(h)f\right>_Q&= \int_U h(x)\left< g, a(x)f\right>_Q\ dx\\
&=\int_U h(x)\int_{U^{n-1}}\left(P_Q^{(n-1)}g\right)(x_1,\ldots, x_{n-1})\overline{R_Q^{(n)}f(x, x_1,\ldots, x_{n-1})} dx_1\ldots dx_{n-1} dx.
\end{split}
\end{equation*}
It now follows from a simple computation that these operator-valued distributions satisfy the 
$Q$-commutation relations \eqref{eqn:qcr}. 
\section{The $Q$-deformed free group von Neumann algebras}\label{sec:vNa}
We now define the main operators of interest, the field operators $w(h)$
by
\begin{equation*}
 w(h):= a^+(h)+a(h)\quad\text{for}\quad h\in\mathcal H.
\end{equation*}
This allows us to define the $Q$-deformed free group von Neumann algebra by
\begin{equation*}
 \Gamma_Q(\mathcal H):=\left\{w(h):h\in \mathcal H\right\}''.
\end{equation*}

Before proving anything about these algebras, we will need some additional notation.
We will sometimes let $a^{-}(h)$ denote $a(h)$ so that we can write $a^{v}(h)$ for 
$v\in \{-,+\}$ to denote either the annihilation or creation operator. 

Given a finite ordered set $S$, we will denote the set of pairings of $S$ by
$P(S)$. That is, $P(S)=\emptyset$ if $S$ has odd cardinality, and if $|S| = 2p$ then
\begin{equation*}
P(S)=\{\{(a_1,z_1),\ldots, (a_p,z_p)\}| a_1<z_1,\ldots, a_p<z_p, \{a_1,\ldots, a_p, z_1,\ldots, z_p\} = S\}.
\end{equation*}

We will denote by $I(\mathcal V)$ the set of crossings of a 
pairing $\mathcal V$, that is, for $\mathcal V=\{(a_1,z_1),\ldots, (a_p,z_p)\}$, 
\begin{equation*}
 I(\mathcal V) = \{(k,l)\in \{1,\ldots, r\}^2|a_k<a_l<z_k<z_l\},
\end{equation*}
 where the inequalities are in the ordering given on $S$.

For a pairing $\mathcal V\in P(S)$ for $S\subset \{1,\ldots, n\}$, we
define a function $Q_{\mathcal V}^n$ on $U^n$ by
\begin{equation*}
 Q_{\mathcal V}^n(\mathbf{x})=\prod_{(k,l)\in I(\mathcal V)}Q(x_{a_k},x_{a_l}).
\end{equation*}
We will simplify notation by writing
\begin{equation*}
 \delta^n_{\mathcal V}(\mathbf{x})=\prod_{(a,z)\in \mathcal
V}\delta(x_a-x_z).
\end{equation*}
Note that the $\delta$ on the right side is the Dirac delta.
\begin{proposition}\label{prop:vac}
Let $f_1,\ldots, f_n\in \mathcal H$ and denote by $S$ the set $\{1,\ldots, n\}$. For $v_1,\ldots, v_n\in\{-,+\}$
\begin{equation*}
 \left<a^{v_n}(f_n)\cdots a^{v_1}(f_1)\Omega, \Omega\right> = \sum_{\mathcal V\in P(S)}D_{\mathbf{v},\mathcal V}
\int\cdots \int f_n(x_n)\cdots f_1(x_1) Q_{\mathcal V}^n(\mathbf{x})\delta^n_{\mathcal V}(\mathbf{x}) dx_1\ldots dx_n,
\end{equation*}
where if $n=2p$, $D_{\mathbf{v},\mathcal V}$ is defined by
\begin{equation*}
 D_{\mathbf{v},\mathcal V}=\prod_{k=1}^p \delta_{v_{a_k},-}\cdot \delta_{v_{z_k},+}.
\end{equation*}
In particular, $\left<w(f_n)\cdots w(f_1)\Omega, \Omega\right> = 0$ when $n$ is odd.
\end{proposition}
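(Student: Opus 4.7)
I will prove this by induction on $n$. The base case $n = 0$ gives $\langle \Omega, \Omega\rangle = 1$, matching the unique empty pairing on the right. For odd $n$, both sides vanish: the RHS since $P(S) = \emptyset$, and the LHS since $a^{v_n}(f_n) \cdots a^{v_1}(f_1)\Omega$ changes the particle count by the odd quantity $\sum_i \mathrm{sign}(v_i)$ and is therefore orthogonal to $\Omega$. This immediately yields the final sentence of the proposition.

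For the inductive step with $n$ even, I first dispose of the degenerate configurations where one end of the product kills the vacuum. Using $a(h)\Omega = 0$ together with the adjoint relation $a^+(h)^* = a(h)$ from the preceding proposition, the LHS vanishes whenever the rightmost operator is an annihilation or (after moving it across the inner product) the leftmost operator is a creation. In each case, the Kronecker-$\delta$ factor $D_{\mathbf v, \mathcal V}$ at the extremal position of any pairing --- position $1$ is always some $a_k$, and position $n$ is always some $z_k$ --- forces every term on the RHS to vanish as well.

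In the nondegenerate case I would iteratively apply the $Q$-commutation relation $a(x) a^+(y) = Q(x, y) a^+(y) a(x) + \delta(x-y)$ to normal-order the product: each adjacent pair in which an annihilation sits just left of a creation splits into a $Q$-weighted normally-ordered term plus a $\delta$-contracted term with $n - 2$ operators. After the product is fully normal-ordered, the remaining un-contracted term has its rightmost annihilation kill $\Omega$ and thus contributes zero, so only the cascade of $\delta$-contractions survives. The surviving contribution is a sum indexed by the choice of ``first-contracted'' pair $\{i, j\}$, each term carrying an accumulated product of $Q$-factors from the intermediate commutations.

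Applying the inductive hypothesis to the $n - 2$ operators remaining in each contraction term gives a sum over pairings $\mathcal V'$ of the remaining indices, which combines with $\{i, j\}$ into a pairing $\mathcal V$ of $\{1, \ldots, n\}$. The main technical obstacle I anticipate is the combinatorial bookkeeping: I must verify that the $Q$-factors accumulated from the normal-ordering process reproduce exactly the factor $Q_{\mathcal V}^n$ in the statement. The key observation is that the crossings of $\mathcal V$ involving $\{i, j\}$ are precisely those pairs of $\mathcal V'$ with exactly one endpoint strictly between $i$ and $j$, and each such crossing contributes exactly the $Q$-factor picked up when commuting one of the operators past the corresponding intermediate position.
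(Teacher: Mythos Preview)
Your plan is workable but organized differently from the paper, and the difference matters for how much bookkeeping you end up doing.

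The paper does \emph{not} induct on $n$. It inducts on the integer
\[
N \;=\; \bigl|\{(j,k): j<k,\ v_j=+,\ v_k=-\}\bigr|,
\]
the number of ``inversions'' in the sign word. When $N=0$ every $-$ precedes every $+$ in the index order, so $a^{v_1}(f_1)\Omega=0$ (or $n=0$) and both sides vanish. For $N>0$ the paper applies the commutation relation \emph{once}, at the minimal $j$ with $v_j=+,\ v_{j+1}=-$, obtaining exactly two terms $X_1$ (swapped, with the extra factor $Q(x_j,x_{j+1})$) and $X_2$ (contracted, with the factor $\delta(x_j-x_{j+1})$). Each has strictly smaller $N$, so the inductive hypothesis applies directly: pairings of the reordered set $S'$ biject with pairings $\mathcal V\in P(S)$ not containing $(j,j+1)$, the extra $Q$-factor supplying the single new crossing; pairings of $\hat S=S\setminus\{j,j+1\}$ biject with those containing $(j,j+1)$, which creates no crossing. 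Summing finishes the step.

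Your ``iteratively normal-order, then induct on $n-2$'' scheme hides this induction on $N$ inside the inductive step. The phrase ``a sum indexed by the choice of first-contracted pair $\{i,j\}$'' is not well-defined until you fix a swap order: different orders contract different pairs first, accumulate different intermediate $Q$-weights, and leave the surviving $n-2$ operators in a permuted order whose crossings must then be translated back to the original labelling. All of this can be carried out (for instance by commuting the rightmost annihilation past the block $a^+(x_{j-1})\cdots a^+(x_1)$ in one sweep), but the combinatorial obstacle you anticipate is precisely what the paper's choice of induction variable dissolves: one swap, two terms, a one-line bijection of pairings, and no further accounting.
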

\begin{proof}
The proof of is by induction on $N:=\left|\{(j,k): j<k, v_j =
+, v_k = -\}\right|$.  The claim is easily seen to be true in the case $N=0$, so we proceed to
assume that $N>0$ and that the claim holds for $N-1$. We will assume that $f_1,\ldots, f_n$ 
lie in the dense subspace $C_c^\infty(U)$ of $\mathcal H$ and then use the $Q$-commutation relation
\eqref{eqn:qcr}. Since $N>0$, we can choose $j$ minimal to satisfy $v_j =+$ and $v_{j+1} =-$. Now
applying \eqref{eqn:qcr},
\begin{align}
a^{v_n}(x_n)\cdots a^{v_1}(x_1) &= a^{v_n}(x_n)\cdots
a^{v_{j+2}}(x_{j+2})a(x_{j+1})a^+(x_{j})a^{v_{j-1}}(x_{j-1}) \cdots
a^{v_1}(x_1) \nonumber\\
&= a^{v_n}(x_n)\cdots a^{v_{j+2}}(x_{j+2})(Q(x_j,
x_{j+1})a^+(x_{j})a(x_{j+1})+\delta(x_j,x_{j+1}))a^{v_{j-1}}(x_{j-1}) \cdots
a^{v_1}(x_1)\nonumber\\
&= Q(x_j, x_{j+1})a^{v_n}(x_n)\cdots
a^{v_{j+2}}(x_{j+2})a^+(x_{j})a(x_{j+1})a^{v_{j-1}}(x_{j-1}) \cdots
a^{v_1}(x_1)\nonumber \\ &\quad+ \delta(x_j-x_{j+1})a^{v_n}(x_n)\cdots
a^{v_{j+2}}(x_{j+2})a^{v_{j-1}}(x_{j-1}) \cdots a^{v_1}(x_1)
\label{eqn:nogoodname}
\end{align}

We now consider the terms in the last line of \eqref{eqn:nogoodname} separately, denoting them by $X_1$
and $X_2$. For compactness of notation, we define $S'=\{1,\ldots, j-1,
j+1, j, j+2,\ldots, n\}$ (as an ordered set) and $\hat S=\{1,\ldots, j-1,
j+2,\ldots, n\}$ and also write $\mathbf{f}(\mathbf{x})$ for the product
$f_n(x_n)\cdots f_1(x_1)$.

For the first term we have by the inductive hypothesis,
\begin{align*}
\int \cdots \int  \mathbf{f}(\mathbf{x}) \left<X_1\Omega,\Omega\right>\ dx_1\ldots dx_n &=
\sum_{\mathcal V\in P(S')}D_{\mathbf{v},\mathcal
V}\int\cdots \int \mathbf{f}(\mathbf{x})Q(x_j, x_{j+1})Q^n_{\mathcal V}(\mathbf{x})  \delta^n_{\mathcal V}(\mathbf{x}) dx_1\ldots dx_n\\
&= \sum_{\substack{\mathcal V\in P(S)\\ (j,j+1)\not\in \mathcal V}}D_{\mathbf{v},\mathcal V}
\int\cdots \int \mathbf{f}(\mathbf{x})Q^n_{\mathcal
V}(\mathbf{x})\delta^n_{\mathcal V}(\mathbf{x}) \ dx_1\ldots dx_n,
\end{align*}

For the second term, 
\begin{align*}
\int \cdots \int \mathbf{f}(\mathbf{x})\left<X_2\Omega,\Omega\right>\ dx_1\ldots dx_n &=
\sum_{\mathcal V\in P(\hat S)}D_{\mathbf{v},\mathcal V} \int\cdots \int \delta(x_j-x_{j+1})
\mathbf{f}(\mathbf{x})
Q^n_{\mathcal V}(\mathbf{x})\delta^n_{\mathcal V}(\mathbf{x}) \ dx_1\ldots dx_n\\
&= \sum_{\substack{\mathcal V\in P(S)\\(j,j+1)\in \mathcal V}}D_{\mathbf{v},\mathcal V} 
\int\cdots \int \mathbf{f}(\mathbf{x})
Q^n_{\mathcal V}(\mathbf{x}) \delta^n_{\mathcal V}(\mathbf{x})  \ dx_1\ldots dx_n.
\end{align*}

The proposition now follows just by adding the results of the two computations
just completed.
\end{proof}
\begin{corollary}\label{cor:vac}
 Let $f_1,\ldots, f_n$ and $S$ be as in Proposition \ref{prop:vac}. Then
\begin{equation*}
 \left<w(f_n)\cdots w(f_1)\Omega, \Omega\right> = \sum_{\mathcal V\in P(S)}
\int\cdots \int f_n(x_n)\cdots f_1(x_1) Q_{\mathcal V}^n(\mathbf{x})\delta^n_{\mathcal V}(\mathbf{x}) dx_1\ldots dx_n,
\end{equation*}
\end{corollary}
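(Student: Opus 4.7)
The plan is to derive the corollary directly from Proposition \ref{prop:vac} by expanding each field operator as $w(f_i)=a^+(f_i)+a^-(f_i)$, distributing the product, and reindexing the resulting sum by the pairings $\mathcal V\in P(S)$.

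First I would write
\begin{equation*}
w(f_n)\cdots w(f_1) \;=\; \sum_{\mathbf v\in\{-,+\}^n} a^{v_n}(f_n)\cdots a^{v_1}(f_1),
\end{equation*}
take the vacuum expectation, and apply Proposition \ref{prop:vac} term by term. This gives
\begin{equation*}
\langle w(f_n)\cdots w(f_1)\Omega,\Omega\rangle \;=\; \sum_{\mathbf v\in\{-,+\}^n}\,\sum_{\mathcal V\in P(S)} D_{\mathbf v,\mathcal V}\int\!\!\cdots\!\!\int f_n(x_n)\cdots f_1(x_1)\,Q^n_{\mathcal V}(\mathbf x)\,\delta^n_{\mathcal V}(\mathbf x)\,dx_1\cdots dx_n.
\end{equation*}

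Next I would swap the order of summation and, for each fixed $\mathcal V$, compute $\sum_{\mathbf v} D_{\mathbf v,\mathcal V}$. If $n$ is odd, $P(S)=\emptyset$ and both sides vanish. If $n=2p$ and $\mathcal V=\{(a_1,z_1),\ldots,(a_p,z_p)\}$, then $D_{\mathbf v,\mathcal V}=\prod_k\delta_{v_{a_k},-}\delta_{v_{z_k},+}$ is $1$ for the unique $\mathbf v$ determined by $v_{a_k}=-$, $v_{z_k}=+$ for every $k$, and $0$ otherwise. Since the index sets $\{a_k\}$ and $\{z_k\}$ partition $S$, this assignment is well defined, so $\sum_{\mathbf v} D_{\mathbf v,\mathcal V}=1$. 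Substituting back yields the claimed formula.

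There is really no serious obstacle: the only thing to check carefully is that a pairing uniquely specifies a consistent assignment of creation/annihilation labels (each index belongs to exactly one pair, so $v_i$ is never demanded to be both $+$ and $-$), and that for odd $n$ both sides agree trivially. Everything else is bookkeeping on the sums already set up in Proposition \ref{prop:vac}.
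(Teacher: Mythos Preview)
Your proof is correct and is precisely the approach the paper takes: the paper's entire proof is the single sentence ``Sum the formula of Proposition \ref{prop:vac} over all choices of $v_1,\ldots,v_n$,'' and you have simply spelled out the details of that summation and the observation that $\sum_{\mathbf v} D_{\mathbf v,\mathcal V}=1$.
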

\begin{proof}
 Sum the formula of Proposition \ref{prop:vac} over all choices of $v_1,\ldots, v_n$.
\end{proof}

\begin{corollary}
 The vacuum state on $\Gamma_Q(\mathcal H)$ is a trace.
\end{corollary}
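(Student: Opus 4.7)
The plan is to check the trace identity $\tau(AB)=\tau(BA)$ on the *-algebra generated by the field operators $w(h)$ and then extend to $\Gamma_Q(\mathcal H)$ by normality of the vacuum state. By linearity, it is enough to verify this when $A=w(f_1)\cdots w(f_k)$ and $B=w(f_{k+1})\cdots w(f_n)$, and an inductive step from $k=n-1$ reduces the claim to the single cyclic identity
\[
 \left<w(f_n)w(f_{n-1})\cdots w(f_1)\Omega,\Omega\right>\;=\;\left<w(f_{n-1})\cdots w(f_1)w(f_n)\Omega,\Omega\right>.
\]

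The next step is to expand both sides using Corollary \ref{cor:vac}. Each side becomes a sum over pairings $\mathcal V\in P(\{1,\ldots,n\})$ of integrals of $\prod_i f_i(x_i)\,Q_{\mathcal V}^n(\mathbf x)\,\delta_{\mathcal V}^n(\mathbf x)$, and the two expressions differ only in the cyclic permutation identifying $f_i$ with the variable $x_i$. Relabeling the integration variables by the corresponding cyclic shift converts one sum into the other, provided one exhibits a weight-preserving bijection between the pairings appearing on the two sides.

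The crux of the proof is that the crossing weight $Q_{\mathcal V}^n$, after the deltas have been evaluated, is a cyclic invariant of the pairing. Geometrically, a pairing on $n$ ordered points is a chord diagram on $n$ boundary points of a disc; the linear-order definition $a_k<a_l<z_k<z_l$ of $I(\mathcal V)$ picks out exactly those pairs of chords that cross, a condition that is topologically invariant under rotation of the boundary points. Since the delta functions identify $x_{a_k}$ with $x_{z_k}$, the weight may be rewritten as $\prod_{(k,l)\in I(\mathcal V)} Q(y_k,y_l)$ with one free variable $y_k$ per pair. The symmetry $Q(x,y)=Q(y,x)$ then absorbs any ambiguity about which endpoint of a pair is recorded first after the cyclic shift, in particular for pairs whose endpoints straddle the ``cut.''

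Once the trace property is established on the *-algebra of polynomials in the $w(h)$, it is promoted to all of $\Gamma_Q(\mathcal H)$ in a standard way: the polynomial *-algebra is strong-* dense in $\Gamma_Q(\mathcal H)$ by von Neumann's bicommutant theorem, and the vacuum state is normal as it is a vector state. Kaplansky's density theorem together with separate strong-* continuity of multiplication on norm-bounded sets then yields the trace identity on all of $\Gamma_Q(\mathcal H)$. The only substantive obstacle is the cyclic invariance of $Q_{\mathcal V}^n$: while geometrically transparent, it must be tracked carefully through the delta-integrated expression to ensure that the induced map on pairings is genuinely a bijection and that the symmetry of $Q$ is invoked in exactly the right place.
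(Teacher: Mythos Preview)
Your proposal is correct and follows the same approach as the paper: both rest on the observation that the pairing formula of Corollary~\ref{cor:vac} is invariant under cyclic permutation of the $w(f_i)$. The paper records this in a single sentence, whereas you spell out why it holds (chord-diagram invariance of crossings, the delta identifications $x_{a_k}=x_{z_k}$, and the symmetry $Q(x,y)=Q(y,x)$) and also address the passage from the polynomial $*$-algebra to $\Gamma_Q(\mathcal H)$ via normality and Kaplansky density, a step the paper leaves implicit.
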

\begin{proof}
 The formula in Corollary \ref{cor:vac} is invariant under cyclic permutations
of the $w(f_i)$.
\end{proof}

\begin{proposition}
 The vacuum vector $\Omega\in \mathcal F_Q(\mathcal H)$ is cyclic and separating 
for $\Gamma_Q(\mathcal H)$. 
\end{proposition}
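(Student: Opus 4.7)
The plan is to establish cyclicity of $\Omega$ first by a degree induction, and then derive the separating property from the trace identity supplied by the preceding corollary.

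For cyclicity, I would show by induction on $n$ that $\mathcal H^{\otimes n}\subseteq K:=\overline{\Gamma_Q(\mathcal H)\Omega}$. The case $n=0$ is immediate since $\Omega\in K$. For the induction step, I would expand
$$w(f_1)\cdots w(f_n)\Omega \;=\; \sum_{\epsilon\in\{-,+\}^n} a^{\epsilon_1}(f_1)\cdots a^{\epsilon_n}(f_n)\Omega.$$
Because $a(h)\Omega=0$, and because each $a(h)$ lowers tensor degree by one while each $a^+(h)$ raises it by one, every nonzero summand lives in some single $\mathcal H^{\otimes k}$ with $k=\#\{i:\epsilon_i=+\}-\#\{i:\epsilon_i=-\}\le n$, with equality only for the all-$+$ summand, which is $f_1\otimes\cdots\otimes f_n$. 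All other surviving summands therefore lie in $\bigoplus_{k<n}\mathcal H^{\otimes k}\subseteq K$ by the inductive hypothesis. Since the entire product $w(f_1)\cdots w(f_n)\Omega$ lies in $K$, so does $f_1\otimes\cdots\otimes f_n$. Elementary tensors are total in $\mathcal H^{\otimes n}$ (the $Q$-norm and the full-Fock norm are equivalent on each level, $P_Q^{(n)}$ being bounded and strictly positive), so $\mathcal H^{\otimes n}\subseteq K$. Thus $K$ contains $\mathcal F_{\text{alg}}(\mathcal H)$ and therefore equals $\mathcal F_Q(\mathcal H)$.

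For the separating property, suppose $x\in\Gamma_Q(\mathcal H)$ satisfies $x\Omega=0$. Writing $\tau(\cdot)=\langle\,\cdot\,\Omega,\Omega\rangle$ and using the trace identity $\tau(ab)=\tau(ba)$, for any $y,z\in\Gamma_Q(\mathcal H)$ one has
$$\langle xy\Omega,z\Omega\rangle \;=\; \tau(z^*xy) \;=\; \tau(yz^*x) \;=\; \langle yz^*(x\Omega),\Omega\rangle \;=\; 0.$$
By the cyclicity just proved, $\{z\Omega:z\in\Gamma_Q(\mathcal H)\}$ is dense, hence $xy\Omega=0$ for every $y\in\Gamma_Q(\mathcal H)$; applying cyclicity once more, $x$ vanishes on a dense subspace of $\mathcal F_Q(\mathcal H)$, so $x=0$ by boundedness.

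The only genuinely delicate point is the degree bookkeeping in the induction: one must verify that in the expansion of $w(f_1)\cdots w(f_n)\Omega$, each term other than the all-creation one is of strictly lower tensor degree. Once this observation is in hand, both the cyclicity and the separating claim follow without any further work.
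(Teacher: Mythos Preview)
Your argument is correct and follows the same overall strategy as the paper: cyclicity by induction on the tensor degree, then separating from cyclicity together with the tracial property of the vacuum state. Your cyclicity proof is essentially identical to the paper's.

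For the separating property the paper takes a slightly different route: it defines the antilinear isometry $J:X\Omega\mapsto X^*\Omega$ (well-defined since $\|X\Omega\|=\|X^*\Omega\|$ by traciality), observes that $J\Gamma_Q(\mathcal H)J\subseteq\Gamma_Q(\mathcal H)'$, and notes that $\Omega$ is cyclic for $J\Gamma_Q(\mathcal H)J$, hence for the commutant. Your direct computation $\langle xy\Omega,z\Omega\rangle=\tau(z^*xy)=\tau(yz^*x)=0$ achieves the same conclusion without invoking $J$ or the inclusion $JMJ\subseteq M'$, so it is marginally more elementary; the paper's version, on the other hand, already sets up the conjugation $J$ that is reused later to define the right field operators $w_r(f)=Jw(f)J$.
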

\begin{proof}
We first show that $\Omega$ is cyclic. It will suffice to show that an
arbitrary $f\in \mathcal H^{\otimes n}$ is in the closure of $\Gamma_Q(\mathcal H)\Omega$.
The proof is by induction on $n$. The cases of $n=0$ and $n=1$ are obvious,
so we assume $n>1$ and $f\in L^2(U^n)$. If $\epsilon > 0$, we can choose
$(f_{ij})\in \mathcal H$ for $i=1,\ldots n$ and $j=1,\ldots, r$ such that
\begin{equation*} 
\left\|f-\sum_{j=1}^r f_{1j}\otimes \cdots\otimes f_{nj}\right\| < \epsilon/2.
\end{equation*}
But then
\begin{equation*} 
f-\sum_{j=1}^r w(f_{1j}) \cdots w(f_{nj})\Omega = \left(f-\sum_{j=1}^r f_{1j}\otimes \cdots\otimes f_{nj}\right) + g,
\end{equation*}
for some $g\in \bigoplus_{k=1}^{n-1}\mathcal H^{\otimes n}$. The claim now follows by
applying the inductive hypothesis to $g$.

To show that $\Omega$ is separating for $\Gamma_Q(\mathcal H)$, it will suffice
to show that $\Omega$ is cyclic for $\Gamma_Q(\mathcal H)'$. We define the 
anti-linear conjugation operator $J:\mathcal F_Q(\mathcal H)\to \mathcal F_Q(\mathcal H)$
by $JX\Omega=X^*\Omega$ for $X\in\Gamma_Q(\mathcal H)$. This operator is well-defined
because by the tracial property $\|X\Omega\|=\|X^*\Omega\|$. Since $J\Gamma_Q(\mathcal H)J$
commutes with $\Gamma_Q(\mathcal H)$, and $\Omega$ is seen to be cyclic for $J\Gamma_Q(\mathcal H)J$
in the same way as for $\Gamma_Q(\mathcal H)$, the claim follows.
\end{proof}

\section{The discretization lemma}\label{sec:discretization}
We will now show that the creation and annihilation operators $a^+(h)$ and $a(h)$ can be realized
as a limit in distribution of operators on a discrete Fock space arising from
the discrete commutation relation as considered in \cite{BS94}. Fix $\epsilon$ and let 
$U_\epsilon:=U\cap \epsilon \mathbb Z^j$. We let $\mathcal H_{\epsilon}$ be a real
Hilbert space with orthonormal basis $\{e_x: x\in U_{\epsilon}\}$. For $x,y\in U_\epsilon$,
we define $q_{xy} = Q(x, y)$.

Bozejko and Speicher showed \cite{BS94} that there is a $q_{xy}$-Fock space 
on $\mathcal H_{\epsilon}$ with vacuum vector $\Omega_\epsilon$,
creation operators $a^+_{\epsilon}(f)$ and annihilation
operators $a_{\epsilon}(e)$ for $e\in \mathcal H$ satisfying the discrete $q_{xy}$-commutation
relation
\begin{equation*}
 a_{\epsilon}(e_x)a_{\epsilon}^+(e_y) - q_{ij}
a_{\epsilon}^+(e_y)c_{\epsilon}(e_x) = \delta_{xy}\cdot 1.
\end{equation*}
The creation operator $a_{\epsilon}^+(e_x)$ and the annihilation operator $a_{\epsilon}(e_x)$
are adjoints with respect to the deformed inner product on the Fock space.
We will denote this Fock space by $\mathcal F_{Q,\epsilon}(\mathcal H_\epsilon)$, its vaccum vector by $\Omega_\epsilon$, and its inner product by $\left<\cdot,\cdot\right>_{Q,\epsilon}$.

Now define $a_\epsilon(f)$ and $a^+_\epsilon(f)$ by
\begin{equation*}
a_\epsilon(f): =\epsilon^{j/2}\sum_{x\in U_\epsilon} f(x) a_\epsilon(e_{x})
\quad\text{and}\quad
a^+_\epsilon(f): =\epsilon^{j/2}\sum_{x\in U_\epsilon} f(x) a_\epsilon^+(e_{x}).
\end{equation*}
Evidently, $(a_\epsilon(f))^* = a^+_\epsilon(\overline{f})$.

To simplify notation, we define for a pairing $\mathcal V$,
\begin{equation*}
D^n_{\mathcal V}(\mathbf{x})=\prod_{(a,z)\in \mathcal V}\delta_{x_a,x_z},
\end{equation*}
where the $\delta_{x_a,x_z}$ on the right side is a Kronecker delta.

\begin{lemma}\label{lem:discretization}
The family $\{a_\epsilon(f):f\in C_c^\infty(U)\}$ converges 
in joint $*$-distribution as $\epsilon\to 0$ to the family 
$\{a(f):f\in C_c^\infty(U)\}$ introduced in Section \ref{sec:fock}
where all of the distributions are with respect to the respective vacuum states.
\end{lemma}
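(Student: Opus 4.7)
The plan is to compare vacuum moments on both sides via explicit pairing formulas and then to recognize the discrete formula as a Riemann sum converging to the continuous one. Since $(a_\epsilon(f))^* = a_\epsilon^+(\overline f)$ and similarly in the continuous case, it suffices to prove
\begin{equation*}
\langle a_\epsilon^{v_n}(f_n)\cdots a_\epsilon^{v_1}(f_1)\Omega_\epsilon, \Omega_\epsilon\rangle_{Q,\epsilon} \;\longrightarrow\; \langle a^{v_n}(f_n)\cdots a^{v_1}(f_1)\Omega,\Omega\rangle
\end{equation*}
as $\epsilon\to 0$, for all $n\ge 0$, all $v_i\in\{-,+\}$, and all $f_i\in C_c^\infty(U)$.

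The first step is to establish the discrete counterpart of Proposition \ref{prop:vac}: for $x_1,\ldots, x_n\in U_\epsilon$,
\begin{equation*}
\langle a_\epsilon^{v_n}(e_{x_n})\cdots a_\epsilon^{v_1}(e_{x_1})\Omega_\epsilon, \Omega_\epsilon\rangle_{Q,\epsilon} = \sum_{\mathcal V\in P(S)} D_{\mathbf v,\mathcal V}\, Q^n_{\mathcal V}(\mathbf x)\, D^n_{\mathcal V}(\mathbf x).
\end{equation*}
This is proved by the same induction used for Proposition \ref{prop:vac}, with the discrete $q_{xy}$-commutation relation and the Kronecker delta in place of \eqref{eqn:qcr} and the Dirac delta. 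Smearing with the $f_i$ (inserting conjugates where $v_i=-$ to account for the antilinearity of the continuous annihilation operator in its argument),
\begin{equation*}
\langle a_\epsilon^{v_n}(f_n)\cdots a_\epsilon^{v_1}(f_1)\Omega_\epsilon,\Omega_\epsilon\rangle_{Q,\epsilon} = \sum_{\mathcal V\in P(S)} D_{\mathbf v,\mathcal V}\cdot \epsilon^{nj/2}\!\!\sum_{\mathbf x\in U_\epsilon^n}\! \widetilde F(\mathbf x)\, Q^n_{\mathcal V}(\mathbf x)\, D^n_{\mathcal V}(\mathbf x),
\end{equation*}
where $\widetilde F$ is the appropriate product of the $f_i(x_i)$ and their conjugates.

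For each pairing $\mathcal V=\{(a_1,z_1),\ldots,(a_p,z_p)\}$ (necessarily with $n=2p$), the Kronecker factor $D^n_{\mathcal V}$ forces $x_{a_k}=x_{z_k}$, so reparametrizing by $y_k:=x_{a_k}$ collapses the $n$-fold sum to a $p$-fold sum over $U_\epsilon^p$. Because $n=2p$, the prefactor $\epsilon^{nj/2}=\epsilon^{pj}$ is precisely the Riemann measure on $U_\epsilon^p$, and the summand restricted to this diagonal is continuous and compactly supported on $U^p$ since each $f_i\in C_c^\infty(U)$ and $Q$ is continuous and bounded. Therefore the Riemann sum converges to
\begin{equation*}
\int_{U^n} \widetilde F(\mathbf x)\, Q^n_{\mathcal V}(\mathbf x)\, \delta^n_{\mathcal V}(\mathbf x)\, d\mathbf x.
\end{equation*}
Summing over $\mathcal V$ and invoking Proposition \ref{prop:vac} identifies the limit with the continuous moment, completing the proof.

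The main obstacle is the passage from a Kronecker-delta sum to a Dirac-delta integral, which is handled by the Riemann-sum step just described. Once the two moment expressions are placed side by side, they share the same combinatorial skeleton (sums over pairings weighted by products of $Q$ on crossings), and the limit reduces to a standard fact about Riemann sums of continuous, compactly supported functions. A minor bookkeeping point is aligning the linear convention for $a_\epsilon(f)$ with the antilinear convention for $a(h)$, which introduces conjugates in the right places but does not affect the structure of the argument.
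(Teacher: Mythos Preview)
Your argument is correct and follows essentially the same route as the paper: both invoke the Bo{\.z}ejko--Speicher pairing formula for the discrete vacuum moments, expand the smeared operators as sums over $U_\epsilon^n$, and pass to the limit to recover the integral expression of Proposition~\ref{prop:vac}. Your write-up is in fact a bit more explicit than the paper's in justifying the limit (observing that the Kronecker constraint collapses the sum to $p$ variables so that $\epsilon^{nj/2}=\epsilon^{pj}$ is exactly the Riemann weight) and in flagging the linear/antilinear convention mismatch between $a_\epsilon(f)$ and $a(f)$, which the paper glosses over.
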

\begin{proof}
We will use the fact, as shown by Bo{\.z}ejko and Speicher in \cite{BS94}, that for
$v_1,\ldots, v_n\in \{+,-\}$,
\begin{equation*}
\left<a^{v_n}_\epsilon(x_n)\cdots a^{v_1}_\epsilon(x_1)\Omega_\epsilon, \Omega_\epsilon\right>
= \sum_{\mathcal V\in P(S)}D_{\mathbf{v},\mathcal V} D_{\mathcal V}^n(\mathbf{x}) \prod_{(k,l)\in I(\mathcal V)} q_{x_{a_k}, x_{a_l}},
\end{equation*}
where $S=\{1,\ldots, n\}$ and $D_{\mathbf{v},\mathcal V}$ is as in Proposition \ref{prop:vac}. Again writing $\mathbf{f}(\mathbf{x})$ for the product
$f_n(x_n)\cdots f_1(x_1)$, we have that
\begin{align*}
\lim_{\epsilon\to 0} \left<a^{v_n}_\epsilon(f_n)\cdots a^{v_1}_\epsilon(f_1)\Omega_\epsilon, \Omega_\epsilon\right>
&= \lim_{\epsilon\to 0} \epsilon^{jn/2}\sum_{\mathbf{x}\in U_\epsilon^n}
\left< \mathbf{f}(\mathbf{x}) a^{v_n}_{\epsilon}(e_{x_n})\cdots a^{v_1}_{\epsilon}(e_{x_1})\Omega_\epsilon, \Omega_\epsilon\right>\\
&= \lim_{\epsilon\to 0} \epsilon^{jn/2}\sum_{\mathbf{x}\in U_\epsilon^n}
\mathbf{f}(\mathbf{x})\sum_{\mathcal V\in P(S)}D_{\mathbf v,\mathcal V}D_{\mathcal V}^n(\mathbf{x})\prod_{(k,l)\in I(\mathcal V)}
q_{x_{a_k}, x_{a_l}}\\
&= \int\cdots\int \mathbf{f}(\mathbf{x})\sum_{\mathcal V\in P(S)} D_{\mathbf v,\mathcal V}
 D^n_{\mathcal V}(\mathbf{x})Q_{\mathcal V}^n(\mathbf{x}) \ dx_1\ldots dx_n\\
&=  \left<a^{v_n}(f_n)\cdots a^{v_1}(f_1)\Omega, \Omega\right>.
\end{align*}
\end{proof}

We conclude this section by noting that the inner product on 
$\mathcal F_{Q,\epsilon}(\mathcal H_\epsilon)$ is defined using positive operators
$P_{Q,\epsilon}^{(n)}$ on $\mathcal H_{\epsilon}^{\otimes n}$ such that 
\begin{equation*}
 \left<\xi,\eta\right>_{Q,\epsilon} = \left<\xi, P_{Q,\epsilon}^{(n)}\eta\right>_{0,\epsilon},
\end{equation*}
for $\xi,\eta\in \mathcal H^{\otimes n}$, where $\left<\cdot,\cdot\right>_{0,\epsilon}$ denotes the inner product
of the Free fock space on $\mathcal H_\epsilon$. Since we have assumed that $\sup_{x,y}|Q(x,y)|< 1$, 
there is an operator $R_{Q,\epsilon}^{(n)}$ of norm at most
$(1-q)^{-1}$ such that $P_{Q,\epsilon}^{(n+1)} = \left(1\otimes P_{Q,\epsilon}^{(n)}\right) R_{Q,\epsilon}^{(n)}$.
One can use this to show that $P_{Q,\epsilon}^{(n+1)} \le (1-q)^{-1}(1\otimes P_{Q,\epsilon}^{(n)})$
for all $\epsilon$.

\section{The factoriality result}\label{sec:factoriality}
To state our main theorem, we will need to introduce the right field operator $w_r(f)$ for $f\in \mathcal H$. We define
\begin{equation*}
 w_r(f)=Jw(f)J,
\end{equation*}
where $J:\mathcal F_Q(\mathcal H)\to \mathcal F_Q(\mathcal H)$ is the canonical antilinear isometry defined
by $J(X\Omega)=X^*\Omega$. Equivalently, 
\begin{equation*}
 w_r(f)=a_r(f)+a_r^+(f),
\end{equation*}
where $a_r(f)$ and $a_r^+(f)$ are the right annihilation and right creation operators defined analogously
to the left annihilation and left creation operators.

\begin{theorem}\label{thm:operator}
Let $g_1,g_2,\ldots \in C_c^\infty (U)$ be
real-valued functions with $g_ig_j=0$ for $i\ne j$ and $\|g_i\|_2=1$. For each $d> 0$, define
\begin{equation*}
 N_d = \sum_{i=1}^d (w(g_i)-w_r(g_i))^2.
\end{equation*}
Then for $d$ sufficiently large, $\ker N_d = \mathbb C\Omega$ and $N_d >\epsilon 1$ on $\mathcal F_Q(\mathcal H)\ominus \mathbb C\Omega$ for some $\epsilon > 0$.
\end{theorem}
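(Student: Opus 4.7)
I first note that each $B_i := w(g_i) - w_r(g_i)$ is self-adjoint, so $N_d = \sum_{i=1}^d B_i^* B_i$ is positive semidefinite and $\ker N_d = \bigcap_{i=1}^d \ker B_i$; the inclusion $\mathbb{C}\Omega \subseteq \ker N_d$ is immediate from $w(g_i)\Omega = g_i = w_r(g_i)\Omega$. To obtain the reverse inclusion together with the spectral gap, I would split $B_i = A_i^+ + A_i^-$, where $A_i^+ := a^+(g_i) - a_r^+(g_i)$ raises tensor degree by one and $A_i^- := a(g_i) - a_r(g_i)$ lowers it; these are mutually $Q$-adjoint.

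For $v = \sum_{n \geq 0} v_n$ with $v_n \in \mathcal{H}^{\otimes n}$, orthogonality of distinct tensor degrees in $\mathcal{F}_Q(\mathcal{H})$ yields
\begin{equation*}
\left<N_d v, v\right>_Q \;=\; \sum_{i=1}^d \sum_{m \geq 0} \left\| A_i^+ v_{m-1} + A_i^- v_{m+1} \right\|_Q^2,
\end{equation*}
with the convention $v_{-1} = 0$. The central estimate I would establish is a uniform lower bound
\begin{equation*}
M_{n,d} := \sum_{i=1}^d \bigl( A_i^- A_i^+ + A_i^+ A_i^- \bigr)\big|_{\mathcal{H}^{\otimes n}} \;\geq\; \epsilon \cdot \mathrm{id}, \qquad n \geq 1,
\end{equation*}
with $\epsilon > 0$ independent of $n$ once $d$ is taken large enough (depending only on $q$). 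On a simple tensor $h_1 \otimes \cdots \otimes h_n$, the action $A_i^+(h_1 \otimes \cdots \otimes h_n) = g_i \otimes h_1 \otimes \cdots \otimes h_n - h_1 \otimes \cdots \otimes h_n \otimes g_i$ is completely explicit, while $A_i^-$ is computable via the operator $R_Q^{(n)}$ of Section \ref{sec:fock} together with its right analog. The hypotheses $\|g_i\|_2 = 1$ and $g_i g_j = 0$ for $i \neq j$ make the inserted-$g_i$ contributions coming from the two creation parts sum coherently across $i$ to give a main term of order $d$, while $Q$-twisted corrections from the interior tensor slots are of size controlled by powers of $q$.

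To address the cross-degree terms $2\,\mathrm{Re}\,\langle A_i^+ v_{m-1}, A_i^- v_{m+1}\rangle_Q$ appearing in the expansion of $\langle N_d v, v\rangle_Q$, I would apply Cauchy--Schwarz against the degree-diagonal bound so that the potentially negative coupling is absorbed into the main term once $d$ is large. An alternative route is to invoke the discretization lemma (Lemma \ref{lem:discretization}) to transfer the relevant vacuum moments to the discrete $q_{ij}$-Fock space, where the analogous combinatorial computation aligns with the arguments of \cite{Krolak}; faithfulness of the vacuum on the von Neumann algebra generated by $N_d$ then allows the spectral gap to be read off.

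The main obstacle is the $n$-uniformity of the lower bound on $M_{n,d}$: the operators $R_Q^{(n)}$ grow in combinatorial complexity with $n$, and the cumulative effect of the $Q$-twisted perturbation terms must be controlled independently of $n$. The choice of $d$ sufficiently large (depending on $q$) enters precisely here, to ensure that the order-$d$ main contribution dominates the accumulated corrections and that the residual coupling between tensor degrees does not destroy the gap.
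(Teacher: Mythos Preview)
Your outline parallels the paper's argument: expand $N_d$ into products of left/right creation and annihilation operators, isolate a main term of size $2d$ on $\mathcal F_Q(\mathcal H)\ominus\mathbb C\Omega$, and show the remainder is dominated for large $d$. The paper does not decompose by tensor degree but works directly with operator norms on the full Fock space; your organization is different but not essentially so. However, two quantitative points in your plan are either wrong or unjustified.

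First, inside $M_{n,d}$ sit the cross terms $a(g_i)a_r^+(g_i)$ and $a_r(g_i)a^+(g_i)$; each splits as, e.g., $a(g_i)a_r^+(g_i)=a_r^+(g_i)a(g_i)+\mathcal L_{n+2}(T_2^{(n+2)}\cdots T_{n+1}^{(n+2)})D_i$, and the second piece summed over $i$ has norm $\le dq^n\le dq$ (Proposition~\ref{prop:contractions}). This competes at the \emph{same} order in $d$ as your main term---it is why the final lower bound is $2d(1-q)-O(\sqrt d)$ rather than $2d-O(\sqrt d)$---and it is $q<1$, not large $d$, that controls it; your phrasing ``controlled by powers of $q$'' and ``the choice of $d$ sufficiently large enters precisely here'' obscures this. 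Second, Cauchy--Schwarz alone cannot absorb the cross-degree part $\sum_i\bigl((A_i^+)^2+(A_i^-)^2\bigr)$ into $M_{n,d}$: a bare application gives only an $O(d)$ bound, which is useless. The paper instead proves directly (Lemma~\ref{lem:bounds}) that each of the mixed sums $\sum_i a^{\pm}(g_i)a^{\pm}(g_i)$, $\sum_i a^{\pm}(g_i)a_r^{\pm}(g_i)$, $\sum_i a_r^{\pm}(g_i)a_r^{\pm}(g_i)$ has operator norm $\le C\sqrt d$; this is the estimate your plan lacks. Your alternative route via discretization is also not viable as stated: Lemma~\ref{lem:discretization} transfers vacuum moments, not spectra, and $\Omega$ is not obviously cyclic for the algebra generated by $N_d$ (which lies in neither $\Gamma_Q(\mathcal H)$ nor its commutant). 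The paper does use discretization, but only locally, to establish $\bigl\|\sum_i(a(g_i)a^+(g_i)-1)\bigr\|\le Cq\sqrt d$ (Proposition~\ref{prop:ancr}).
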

In view of a theorem of Connes \cite{Connes}, this theorem will have the following consequence. It was pointed out to us by an anonymous referee that this follows immediately from the main theorem of Krolak in \cite{Krolak}.
\begin{corollary}\label{cor:factor}
The von Neumann algebra $\Gamma_Q(\mathcal H)$ is a factor which 
does not have property $\Gamma$.
\end{corollary}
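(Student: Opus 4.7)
The plan is to directly analyze the quadratic form $\langle N_d\xi,\xi\rangle_Q=\sum_{i=1}^d\|X_i\xi\|_Q^2$, where $X_i:=w(g_i)-w_r(g_i)$ is self-adjoint, and to establish a lower bound $\langle N_d\xi,\xi\rangle_Q\ge\epsilon\|\xi\|_Q^2$ uniform over $\xi\in\mathcal F_Q(\mathcal H)\ominus\mathbb C\Omega$ for $d$ large. This implies both conclusions at once: any $\xi\in\ker N_d$ orthogonal to $\Omega$ would then satisfy $0\ge\epsilon\|\xi\|_Q^2$, forcing $\xi=0$.

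First, I would split $X_i=X_i^++X_i^-$, where $X_i^+:=a^+(g_i)-a_r^+(g_i)$ raises the number-grading by one (and involves no $Q$-dependence, since the $Q$-creation operators coincide with the free ones) while $X_i^-:=a(g_i)-a_r(g_i)$ lowers it; these are mutual adjoints with respect to $\langle\cdot,\cdot\rangle_Q$. For homogeneous $\xi\in\mathcal H^{\otimes n}$, graded orthogonality gives $\|X_i\xi\|_Q^2=\|X_i^+\xi\|_Q^2+\|X_i^-\xi\|_Q^2$, so it suffices to lower-bound $\sum_i\|X_i^+\xi\|_Q^2$ uniformly in $n\ge1$. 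For general $\xi$, the cross-graded terms in $\|X_i\xi\|_Q^2$ can be absorbed via Cauchy-Schwarz using that each $X_i^\pm$ is bounded, so the homogeneous estimate reduces the global problem to the level-$n$ one.

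Computing $X_i^+\xi=g_i\otimes\xi-\xi\otimes g_i$ and expanding yields
\begin{equation*}
\|X_i^+\xi\|_Q^2=\|g_i\otimes\xi\|_Q^2+\|\xi\otimes g_i\|_Q^2-2\operatorname{Re}\langle g_i\otimes\xi,\xi\otimes g_i\rangle_Q.
\end{equation*}
For the diagonal terms, I would use $P_Q^{(n+1)}=(1\otimes P_Q^{(n)})R_Q^{(n+1)}$ from \eqref{eqn:oprelation}: the identity contribution to $R_Q^{(n+1)}$ yields exactly $\|g_i\|_2^2\|\xi\|_Q^2=\|\xi\|_Q^2$, while the remaining terms of $R_Q^{(n+1)}$ have total norm at most $q/(1-q)$. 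Combined with the support-disjointness $g_ig_j=0$, this produces $\sum_{i=1}^d\|g_i\otimes\xi\|_Q^2$ of order $d\|\xi\|_Q^2$, and similarly for $\|\xi\otimes g_i\|_Q^2$, so the diagonal pieces together contribute a term of order $d\|\xi\|_Q^2$.

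The main obstacle is controlling the cross term $\langle g_i\otimes\xi,\xi\otimes g_i\rangle_Q$: the naive bound $\|P_Q^{(n+1)}\|\le(1-q)^{-1}$ only yields $O(1)$ per $i$, which cannot be absorbed for large $d$. A sharper estimate exploits that in the sum $P_Q^{(n+1)}=\sum_{\sigma\in S_{n+1}}\phi_{n+1}(\sigma)$, the significant contributions come only from permutations $\sigma$ which move the slot of $g_i$ from position $1$ to position $n+1$; every such $\sigma$ has at least $n$ inversions involving the $g_i$-slot, so the corresponding operator $\phi_{n+1}(\sigma)$ has norm at most $q^n$ (using $\|T_i^{(n+1)}\|\le q$). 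The resulting geometric decay in $n$, combined with the disjointness of the $g_i$, shows the cross term is uniformly small in $n$ and $i$; this is essentially the combinatorial bottleneck that Krolak handles in the discrete $q_{ij}$-setting \cite{Krolak}, and the continuous case is an adaptation via the $R_Q^{(n)}$ analysis from Section \ref{sec:fock}. Assembling these estimates yields $\sum_{i=1}^d\|X_i^+\xi\|_Q^2\ge(Ad-B)\|\xi\|_Q^2$ for constants $A,B>0$ independent of $n$, and choosing $d>B/A$ delivers the spectral gap.
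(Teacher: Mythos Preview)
You have written a proof sketch for Theorem~\ref{thm:operator} (the spectral gap $N_d\ge\epsilon$ on $\mathcal F_Q(\mathcal H)\ominus\mathbb C\Omega$), not for Corollary~\ref{cor:factor}. The ``both conclusions'' you derive are precisely the two assertions of the theorem; the corollary asks for something else, namely that $\Gamma_Q(\mathcal H)$ is a factor and lacks property~$\Gamma$. What is entirely missing from your proposal is the passage from the spectral gap to these algebraic conclusions. In the paper this deduction is short but essential: if $X\in\Gamma_Q(\mathcal H)\cap\Gamma_Q(\mathcal H)'$ then $X$ commutes with each $w(g_i)\in\Gamma_Q(\mathcal H)$ and with each $w_r(g_i)=Jw(g_i)J\in\Gamma_Q(\mathcal H)'$, and since $(w(g_i)-w_r(g_i))\Omega=0$ one gets $(w(g_i)-w_r(g_i))X\Omega=X(w(g_i)-w_r(g_i))\Omega=0$, hence $X\Omega\in\ker N_d=\mathbb C\Omega$; as $\Omega$ is separating, $X\in\mathbb C1$. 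The absence of property~$\Gamma$ is then read off from Connes's criterion \cite[Theorem~2.1]{Connes}, which you never invoke. Neither of these two steps appears anywhere in your write-up.

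Regarded instead as an attempt at the theorem, your approach is in the same spirit as the paper's (and Kr\'olak's) but organised differently. The paper does not split into $X_i^\pm$ or restrict to graded $\xi$; it expands $N_d$ into its sixteen creation/annihilation summands and writes $N_d=2d+B_1-B_2+B_3$, bounding $\|B_1\|\le 2Cq\sqrt d$ via Proposition~\ref{prop:ancr}, $\|B_2\|\le 2qd$ via Proposition~\ref{prop:contractions}, and $\|B_3\|\le 14C\sqrt d$ via Lemma~\ref{lem:bounds}, which yields $N_d\ge 2d(1-q)-O(\sqrt d)$ as an operator inequality with no grading hypothesis. Your reduction from general $\xi$ to homogeneous $\xi$ is the weak point: the cross-graded terms you propose to ``absorb via Cauchy--Schwarz'' are exactly the $a^\pm a^\pm$, $a_r^\pm a_r^\pm$ and mixed pieces sitting inside $B_3$, and bounding their sum over $i$ by $O(\sqrt d)$ rather than the naive $O(d)$ genuinely requires the orthonormality of the $g_i$ as in Lemma~\ref{lem:bounds}, not merely the boundedness of each individual $X_i^\pm$.
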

\begin{proof}
Choose $N_d$ large enough that $N_d>\epsilon1$ on the orthogonal complement of the vacuum subspace.
If $X\in \Gamma_Q(\mathcal H)\cap \Gamma_Q(\mathcal H)'$ 
then $\left(w(g_i)-w_r(g_i)\right)X=0$ for $i=1,\ldots,d$. Thus $X\Omega\in \ker N_d=\mathbb C\Omega$.
Since $\Omega$ is separating, $X\in\mathbb C$. Thus, $\Gamma_Q(\mathcal H)$ is a factor of Type $II_1$.
By Theorem 2.1 of \cite{Connes}, $\Gamma_Q(\mathcal H)$ does not have property $\Gamma$.
\end{proof}

Our method of proof of Theorem \ref{thm:operator} will be similar to that used by Krolak 
\cite{Krolak} and will require some estimates.
\begin{proposition}\label{prop:contractions}
For each $n$, define operators 
\begin{equation*}
\mathcal L_n: \mathcal H \otimes  \mathcal H^{\otimes (n-1)} \to \mathcal H^{\otimes (n-2)}
\quad\text{and}\quad
\mathcal R_n: \mathcal H^{\otimes (n-1)}\otimes \mathcal H \to \mathcal H^{\otimes (n-2)}
\end{equation*}
by
\begin{equation*}
 \mathcal L_n(h\otimes f) = l(h)f
\quad\text{and}\quad
 \mathcal R_n (f\otimes h) = r(h)f,
\end{equation*}
where  $l(f)$ and $r(f)$ are the free left and right annihilation operators, respectively
acting on $\mathcal H^{\otimes (n-1)}$ as a subspace of $\mathcal F_Q(\mathcal H)$.
Suppose that $g\in \mathcal H$ with $\|g\|=1$ and define $D$ on $\mathcal H^{\otimes n}$ by $D(f)= g\otimes f\otimes g$.
Then
\begin{equation*}
\left\|\mathcal L_{n+2}(T_2^{(n+2)}\cdots T_{n+1}^{(n+2)})D\right\|_Q \le q^n
\quad\text{and}\quad
\left\|\mathcal R_{n+2}(T_n^{(n+2)}\cdots T_1^{(n+2)})D\right\|_Q \le q^n.
\end{equation*}
\end{proposition}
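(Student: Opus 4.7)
The plan is to identify the composite operator $\mathcal L_{n+2}(T_2^{(n+2)}\cdots T_{n+1}^{(n+2)})D$ on $\mathcal H^{\otimes n}$ as a multiplication operator $M_\alpha$ by an explicit real, symmetric function $\alpha$ with pointwise bound $|\alpha|\le q^n$, and then to exploit the symmetry of $\alpha$ to convert this pointwise bound into the claimed operator bound in the $Q$-norm.

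First I would compute $(T_2^{(n+2)}\cdots T_{n+1}^{(n+2)})Df$ for $f\in\mathcal H^{\otimes n}$. The product of transpositions $\pi_2\pi_3\cdots\pi_{n+1}$ is the cycle $(2,3,\ldots,n+2)$, which brings the rightmost tensor leg of $Df = g\otimes f\otimes g$ up to the second position and accumulates one $Q$-factor per swap. Unwinding the definition of each $T_i^{(n+2)}$ (by a short induction on $n$) yields
\begin{equation*}
(T_2^{(n+2)}\cdots T_{n+1}^{(n+2)})Df(x_1,\ldots,x_{n+2}) = g(x_1)\,g(x_2)\prod_{i=3}^{n+2}Q(x_2,x_i)\,f(x_3,\ldots,x_{n+2}).
\end{equation*}
Applying $\mathcal L_{n+2}$, which contracts the first slot against $\overline{g(x_1)}$, and using $\|g\|=1$, I obtain
\begin{equation*}
\mathcal L_{n+2}(T_2^{(n+2)}\cdots T_{n+1}^{(n+2)})Df(x_1,\ldots,x_n) = \alpha(x_1,\ldots,x_n)\,f(x_1,\ldots,x_n),
\end{equation*}
where $\alpha(x_1,\ldots,x_n):=\int_U|g(y)|^2\prod_{i=1}^n Q(y,x_i)\,dy$. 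Hence the composite equals $M_\alpha$, and the estimate $|Q|\le q$ together with $\int|g|^2=1$ gives $|\alpha(\mathbf x)|\le q^n$ pointwise.

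The key observation is that $\alpha$ is real and fully symmetric in $x_1,\ldots,x_n$, since the product $\prod_i Q(y,x_i)$ is symmetric in the $x_i$. Because each $\phi_n(\sigma)$ acts on functions by $\phi_n(\sigma)h(\mathbf x) = Q_\sigma(\mathbf x)h(\mathbf x^\sigma)$ for some coefficient $Q_\sigma$, symmetry of $\alpha$ gives $M_\alpha \phi_n(\sigma) = \phi_n(\sigma)M_\alpha$ for every $\sigma\in S_n$, so $M_\alpha$ commutes with $P_Q^{(n)}$. The commuting positive self-adjoint operators $M_{q^{2n}-\alpha^2}$ and $P_Q^{(n)}$ therefore have positive product, i.e.\ $M_{\alpha^2}P_Q^{(n)}\le q^{2n}P_Q^{(n)}$ as operators on $\mathcal H^{\otimes n}$, so that
\begin{equation*}
\|M_\alpha f\|_Q^2 = \langle f, M_\alpha P_Q^{(n)} M_\alpha f\rangle_0 = \langle f, M_{\alpha^2}P_Q^{(n)}f\rangle_0 \le q^{2n}\|f\|_Q^2,
\end{equation*}
which is the asserted bound. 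The estimate for $\mathcal R_{n+2}(T_n^{(n+2)}\cdots T_1^{(n+2)})D$ follows by the mirror-image computation, with the cycle now moving the leftmost $g$ down to position $n+1$. The main obstacle is really just the initial tensor-combinatorial computation that identifies $M_\alpha$; once the multiplication structure and its symmetry are in hand, the norm inequality is essentially automatic.
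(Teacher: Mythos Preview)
Your argument is correct. Both your proof and the paper's hinge on the same key fact, namely that the composite operator commutes with $P_Q^{(n)}$, and then use this to pass from the $Q$-norm to the $0$-norm where the bound $q^n$ is immediate. The difference lies in how that commutation is established. The paper argues abstractly: it verifies directly via the braid relations \eqref{eqn:generalizedbraid} that $\mathcal R_{n+2}(T_n^{(n+2)}\cdots T_1^{(n+2)})D$ commutes with each $T_k^{(n)}$, never identifying the operator explicitly; once commutation holds, the $Q$-operator norm equals the $0$-operator norm, which is then bounded by the product $\|\mathcal R_{n+2}\|_0\,\|T_n^{(n+2)}\cdots T_1^{(n+2)}\|_0\,\|D\|_0\le q^n$. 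You instead compute the operator concretely as $M_\alpha$ and read the commutation off from the full symmetry of $\alpha$. Your route is more explicit and arguably more transparent in this setting, since one sees exactly what the operator does; the paper's braid-relation manipulation is more portable, applying verbatim in the general Yang--Baxter framework (e.g.\ Kr\'olak's setting) where the $T_i$ need not come from multiplication by a function and no analog of $M_\alpha$ is available.
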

\begin{proof}
We will prove the second statement, and the first can be proven analogously. Our approach is similar to that of Lemma 7 in \cite{Krolak}.  Namely, we will begin by showing that the operator $\mathcal R_{n+2}(T_n^{(n+2)}\cdots T_{1}^{(n+2)})D$
commutes with  $P_Q^{(n)}$, the operator used to define the $Q$-inner product in Section \ref{sec:fock}. For this,
it will suffice to show that $\mathcal R_{n+2}(T_n^{(n+2)}\cdots T_{1}^{(n+2)})D$ commutes with $\phi_n(\sigma)$ for each $\sigma\in S_n$, where
$\phi_n:S_{n}\to \mathcal H^{\otimes n}$ is as in Section \ref{sec:fock}. By quasimultiplicativity of $\phi_n$,
we can further assume that $\sigma$ is one of the fundamental transpositions $\pi_k$. Using the relation \eqref{eqn:generalizedbraid}, we have
\begin{equation*}
\begin{split}
 \mathcal R_{n+2}(T_n^{(n+2)}\cdots T_1^{(n+2)})D\phi_n(\pi_k) &= \mathcal R_{n+2}(T_n^{(n+2)}\cdots T_1^{(n+2)})DT_k^{(n)}\\
&= \mathcal R_{n+2}(T_n^{(n+2)}\cdots T_1^{(n+2)})T_{k+1}^{(n+2)}D\\
&= \mathcal R_{n+2}(T_n^{(n+2)}\cdots T_{k+2}^{(n+2)}T_{k+1}^{(n+2)}T_{k}^{(n+2)}T_{k+1}^{(n+2)}T_{k-1}^{(n+2)}\cdots T_1^{(n+2)})D\\
&= \mathcal R_{n+2}(T_n^{(n+2)}\cdots T_{k+2}^{(n+2)}T_{k}^{(n+2)}T_{k+1}^{(n+2)}T_{k}^{(n+2)}T_{k-1}^{(n+2)}\cdots T_1^{(n+2)})D\\
&= \mathcal R_{n+2}T_{k}^{(n+2)}(T_n^{(n+2)}\cdots T_1^{(n+2)})D\\
&=  T_{k}^{(n)}\mathcal R_{n+2}(T_n^{(n+2)}\cdots T_1^{(n+2)})D\\
&=  \phi_n(\pi_k)\mathcal R_{n+2}(T_n^{(n+2)}\cdots T_1^{(n+2)})D.
\end{split}
\end{equation*}
Therefore,  $\mathcal R_{n+2}(T_n^{(n+2)}\cdots T_1^{(n+2)})D$ commutes with $P_Q^{(n)}=\sum_{\sigma\in S_n}\phi_n(\sigma)$. In particular,
this means that
\begin{align*}
 \left\|\mathcal R_{n+2}(T_n^{(n+2)}\cdots T_1^{(n+2)})D\phi_n(\sigma)\right\|_Q&=\left\|\mathcal R_{n+2}(T_n^{(n+2)}\cdots T_1^{(n+2)})D\phi_n(\sigma)\right\|_0\\
&\le \left\|\mathcal R_{n+2}\right\|_0\left\|(T_n^{(n+2)}\cdots T_1^{(n+2)})\right\|_0\left\|D\phi_n(\sigma)\right\|_0\\
&\le 1\cdot q^n\cdot 1.
\end{align*}
In the last line, we have used the fact that $D$ is an isometry in the $0$-norm and $\mathcal R_{n+2}$ is a
contraction when restricted to the subspace $\mathcal H^{\otimes (n+1)}\otimes g$.
\end{proof}
The next lemma provides an analog to parts of Lemma 8 of \cite{Krolak}.
\begin{lemma}\label{lem:bounds}
There is a constant $C$, depending only on $Q$, such that all of the following estimates hold for any orthonormal vectors $h_1,\ldots, h_d\in\mathcal H$:
\begin{enumerate}
 \item 
 $\left\|\sum_{i=1}^d a^+(h_i)a_r^+(h_i)\right\|_Q \le C\sqrt{d}$ and 
 $\left\|\sum_{i=1}^d a(h_i)a_r(h_i)\right\|_Q  \le C\sqrt{d}$ \label{itm:lcrrcr}
 \item 
 $\left\|\sum_{i=1}^d a^+(h_i)a_r(h_i)\right\|_Q \le C\sqrt{d}$ and 
 $\left\|\sum_{i=1}^d a_r^+(h_i)a(h_i)\right\|_Q  \le C\sqrt{d}$ \label{itm:lcrran}
 \item
 $\left\|\sum_{i=1}^d a(h_i)a(h_i)\right\|_Q \le C\sqrt{d}$ and  
 $\left\|\sum_{i=1}^d a_r(h_i)a_r(h_i)\right\|_Q  \le C\sqrt{d}$ \label{itm:lanlan}
 \item 
 $\left\|\sum_{i=1}^d a^+(h_i)a(h_i)\right\|_Q \le C\sqrt{d}$ and 
 $\left\|\sum_{i=1}^d a_r^+(h_i)a_r(h_i)\right\|_Q  \le C\sqrt{d}$ \label{itm:lcrlan}
\end{enumerate}
\end{lemma}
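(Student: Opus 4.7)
The approach is to bound each operator norm via $\|A\|_Q^2 = \|A^*A\|_Q$ and show that $\|A^*A\|_Q \le C^2 d$, yielding $\|A\|_Q \le C\sqrt d$. In each of the four items, the second estimate follows from the first by conjugating with the antilinear isometry $J$ defined in Section \ref{sec:factoriality}, which exchanges left and right creation/annihilation operators while preserving the $Q$-norm, so it suffices to establish the four left-handed estimates.

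I would first dispatch the number-operator-like case (4). On $\mathcal H^{\otimes n}$, orthonormality gives $\sum_i a^+(h_i) a(h_i) = (P_V \otimes I)R_Q^{(n)}$, where $P_V$ is the orthogonal projection onto $\mathrm{span}\{h_1, \ldots, h_d\}$. Combining $\|R_Q^{(n)}\|_0 \le (1-q)^{-1}$ with the bound $P_Q^{(n)} \le (1-q)^{-1}(1\otimes P_Q^{(n-1)})$ and the fact that $P_V \otimes I$ commutes with $1 \otimes P_Q^{(n-1)}$, one extracts a bound on $\|A_4\|_Q$ independent of $d$, so the $C\sqrt d$ bound is automatic.

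Cases (1), (2), (3) are the substantive ones and require Proposition \ref{prop:contractions}. For case (1), let $A_1 = \sum_i a^+(h_i)a_r^+(h_i)$; since left and right operators commute,
\begin{equation*}
A_1^*A_1 = \sum_{i,j} a_r(h_j) a_r^+(h_i) \, a(h_j) a^+(h_i).
\end{equation*}
Expanding each of the two pairs $a(h_j)a^+(h_i)$ and $a_r(h_j)a_r^+(h_i)$ via the $Q$-commutation relation produces a diagonal part, where orthonormality forces $i=j$ and delivers a factor of $d$ multiplied by an operator of bounded $Q$-norm, plus a remainder built out of creation and annihilation operators composed with products of $T$-operators. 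The key step is to identify these remainder pieces, after a coset-rearrangement of the sums defining $P_Q^{(n+2)}$ analogous to \eqref{eqn:oprelation}, with the contractions $\mathcal L_{n+2}(T_2^{(n+2)}\cdots T_{n+1}^{(n+2)})D$ and $\mathcal R_{n+2}(T_n^{(n+2)}\cdots T_1^{(n+2)})D$ of Proposition \ref{prop:contractions}. That Proposition bounds each such piece on $\mathcal H^{\otimes n}$ by $q^n$, and the geometric series $\sum_n q^n = 1/(1-q)$ absorbs these corrections into a universal constant. Cases (2) and (3) follow the same template using the analogous $Q$-commutation expansions, where in (3) both annihilations sit on the same side and the diagonal/remainder split arises from expanding $A^*A = \sum_{i,j} a^+(h_j)a^+(h_j)a(h_i)a(h_i)$ by commuting the middle pair past each other.

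The main obstacle will be the careful combinatorial bookkeeping in the $A^*A$ expansion for cases (1)--(3): one must repeatedly pull out the $\delta_{ij}$ contributions from the $Q$-commutation relation and reorganize the residual $T$-operator products so that they match the specific shapes appearing in Proposition \ref{prop:contractions}. This parallels (but is somewhat messier than) Kr{\'o}lak's Lemma 8 in \cite{Krolak}, and the precise matching of indices between our setup and that Proposition is what makes the geometric tail bound available and the argument go through.
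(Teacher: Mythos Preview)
Your plan is substantially more complicated than what the paper does, and in fact misplaces the role of Proposition~\ref{prop:contractions}. The paper's proof of Lemma~\ref{lem:bounds} does \emph{not} use Proposition~\ref{prop:contractions} at all; that proposition is invoked only later, inside the proof of Theorem~\ref{thm:operator}, to control the specific cross terms $a(g_i)a_r^+(g_i)$ and $a_r(g_i)a^+(g_i)$ with a single index $i$. For the lemma itself the paper argues directly on $\|Af\|_Q^2$ for $f\in\mathcal H^{\otimes n}$: it writes, for example,
\[
\Bigl\|\sum_i a^+(h_i)a_r^+(h_i)f\Bigr\|_Q^2=\Bigl\langle \sum_i h_i\otimes f\otimes h_i,\ P_Q^{(n+2)}\sum_j h_j\otimes f\otimes h_j\Bigr\rangle_0,
\]
applies the operator inequality $P_Q^{(n+2)}\le C^2(1\otimes P_Q^{(n)}\otimes 1)$ (two applications of $P_Q^{(m+1)}\le C(1\otimes P_Q^{(m)})$ and its right-sided analog), and then orthonormality of the $h_i$ kills the off-diagonal $\langle h_i,h_j\rangle$ terms, leaving $dC^2\|f\|_Q^2$. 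Part~(2) is handled the same way, and the second estimates follow by taking adjoints or by the obvious left/right symmetry; the remaining cases are declared analogous. No $Q$-commutation expansion, no $T$-operator bookkeeping.

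Your route also has a genuine gap. You write $A_1^*A_1=\sum_{i,j} a_r(h_j)a_r^+(h_i)\,a(h_j)a^+(h_i)$ ``since left and right operators commute,'' but only left and right \emph{creations} (and left and right \emph{annihilations}) commute; $a(h_j)$ and $a_r^+(h_i)$ do not. Their commutator is precisely the $l(h_j)T_1^{(n+1)}\cdots T_n^{(n+1)}(\,\cdot\,\otimes h_i)$ term, which for $i\ne j$ involves two different vectors and therefore does not match the operator $\mathcal L_{n+2}(T_2\cdots T_{n+1})D$ of Proposition~\ref{prop:contractions}, stated only for $D(f)=g\otimes f\otimes g$ with a single $g$. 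Even granting the obvious generalization of that proposition to $g\otimes f\otimes g'$, you would be left summing $d^2$ such terms, and the $q^n$ bound alone does not beat that down to $O(d)$. So as written the argument does not close; the elementary ``peel off the outer tensor factors'' estimate is both simpler and what the paper actually does.
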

\begin{proof}
We take $C=\frac{1}{1-q}$, which is large enough so that $P_Q^{(n+1)} \le C (1\otimes P_Q^{(n)})$
for all $n$ is as established in \eqref{eqn:rdef}.  In general, to prove that an operator
$X$ has norm at most $K$, it will be sufficient to prove that $\|Xf\|^2\le K^2\|f\|^2$ for all
of the form $f \in\mathcal H^{\otimes n}$ where $n\ge 0$ is arbitrary.
To prove the first bound in part \ref{itm:lcrrcr}, we have
\begin{equation*}
 \begin{split}
\left\|\sum_{i=1}^d a^+(h_i)a_r^+(h_i)f\right\|_Q^2 & = \left\|\sum_{i=1}^d h_i\otimes f \otimes h_i\right\|_Q^2\\
&= \left<\sum_{i=1}^d h_i\otimes f\otimes h_i, \sum_{j=1}^d P_Q^{(n+2)}h_j\otimes f\otimes h_j\right>_Q\\
&\le C^2\left<\sum_{i=1}^d h_i\otimes f\otimes h_i, \sum_{j=1}^d (1\otimes P_Q^{(n)}\otimes 1)h_j\otimes f\otimes h_j\right>_0\\
&= C^2\sum_{i,j=1}^d\left<h_i,h_j\right>\left<h_i,h_j\right>\left< f, P_Q^{(n)}f\right>_0\\
&= C^2\sum_{i=1}^d\left< f, P_Q^{(n)}f\right>_0\\
&= dC^2\|f\|_Q^2
 \end{split}
\end{equation*}
For the second bound in part \ref{itm:lcrrcr}, we have that
\begin{equation*}
 \left\|\sum_{i=1}^d a(h_i)a_r(h_i)\right\|_Q =  \left\|\left(\sum_{i=1}^d a_r^+(h_i)a^+(h_i)\right)^*\right\|_Q
 =  \left\|\sum_{i=1}^da^+_r(h_i) a^+(h_i)\right\|_Q\le C\sqrt{d},
\end{equation*}
where in the last line we have used the first bound  in part \ref{itm:lcrrcr}.

The proof of the first bound in  part \ref{itm:lcrran} is similar:
\begin{align*}
\left\|\sum_{i=1}^d a^+(h_i)a_r(h_i)f\right\|_Q^2&=\left\|\sum_{i=1}^d h_i\otimes a_r(h_i)f\right\|_Q^2\\
&=\sum_{i,j=1}^d\left<P_Q^{(n)} \left(h_i\otimes a_r(h_i)f\right), h_j\otimes a_r(h_j)f\right>_Q\\
&\le C\sum_{i,j=1}^d\left<(1\otimes P_Q^{(n-1)}) \left(h_i\otimes a_r(h_i)f\right), h_j\otimes a_r(h_j)f\right>_Q\\
&\le C\sum_{i=1}^d\left<P_Q^{(n-1)}a_r(h_i)f, a_r(h_i)f\right>_Q\\
&\le C\sum_{i=1}^d\left\|a_r(h_i)f\right\|_Q^2\\
&\le dC^2\|f\|_Q^2.
\end{align*}
The arguments used to prove the second inequality in part \ref{itm:lcrran} and all the remaining estimates are similar to those cases just completed.
\end{proof}
We will need one additional bound, which is analogous to  the last part of Lemma 8 of \cite{Krolak}.
\begin{proposition}\label{prop:ancr}
If $h_1,\ldots, h_d\in C_c^\infty(U)$ are such that $\|h_i\|_2=1$ and $h_ih_j=0$ for $i\ne 0$ then
there is a constant $C$, depending only on $Q$, such that
\begin{equation*}
\left\| \sum_{i=1}^d\left( a(h_i)a^+(h_i)-1\right) \right\|\le Cq\sqrt{d}
\quad\text{and}\quad
\left\| \sum_{i=1}^d\left( a_r(h_i)a_r^+(h_i)-1\right) \right\|\le Cq\sqrt{d}.
\end{equation*}
\end{proposition}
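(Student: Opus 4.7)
The plan is to exploit the $Q$-commutation relation \eqref{eqn:qcr} to express $a(h_i)a^+(h_i) - 1$ in a form that makes a factor of $Q$ explicit, and then estimate via an integral-kernel decomposition. Smearing \eqref{eqn:qcr} against $\overline{h_i(x)}h_i(y)$ and using $\|h_i\|_2 = 1$ yields
\begin{equation*}
a(h_i) a^+(h_i) - 1 \;=\; \iint \overline{h_i(x)}\, h_i(y)\, Q(x,y)\, a^+(y)\, a(x)\, dx\, dy,
\end{equation*}
so that $\sum_{i=1}^d \bigl(a(h_i) a^+(h_i) - 1\bigr) = \iint F(x,y)\, a^+(y)\, a(x)\, dx\, dy$, where $F(x,y) := Q(x,y) \sum_{i=1}^d \overline{h_i(x)} h_i(y)$. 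Because $h_ih_j = 0$ pointwise for $i\ne j$, the cross terms in $|F(x,y)|^2$ vanish, giving the key Hilbert--Schmidt bound
\begin{equation*}
\|F\|_{\mathrm{HS}}^2 \;\le\; q^2 \sum_{i=1}^d \|h_i\|_2^4 \;=\; q^2 d.
\end{equation*}

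I would then decompose $F$ by its singular value decomposition, $F(x,y) = \sum_k \sigma_k \phi_k(y) \overline{\psi_k(x)}$, with $\{\phi_k\}$ and $\{\psi_k\}$ orthonormal in $\mathcal{H}$ and $\sum_k \sigma_k^2 = \|F\|_{\mathrm{HS}}^2 \le q^2 d$. A direct computation on each $\mathcal{H}^{\otimes n}$ identifies the operator in question as $\sum_k \sigma_k\, a^+(\phi_k)\, a(\psi_k)$. To bound the $Q$-norm, for $\xi \in \mathcal{H}^{\otimes n}$ I would mimic the proof of Lemma \ref{lem:bounds}:
\begin{equation*}
\Bigl\|\sum_k \sigma_k\, \phi_k \otimes a(\psi_k)\xi\Bigr\|_Q^2 \;\le\; C \sum_k \sigma_k^2\, \|a(\psi_k)\xi\|_Q^2 \;\le\; C^2 \sum_k \sigma_k^2\, \|\xi\|_Q^2 \;\le\; C^2 q^2 d\, \|\xi\|_Q^2,
\end{equation*}
where the first inequality uses orthonormality of $\{\phi_k\}$ together with $P_Q^{(n+1)} \le C(1 \otimes P_Q^{(n)})$, the second uses the bound $\|a(\psi_k)\|_Q \le \sqrt{C}$ from the first proposition, and $C = 1/(1-q)$. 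Taking square roots and supremum over $\xi$ gives $\|\sum_i(a(h_i)a^+(h_i)-1)\|_Q \le Cq\sqrt{d}$. The right-operator estimate follows by the same argument, using the right-side analogue $P_Q^{(n+1)} \le C(P_Q^{(n)} \otimes 1)$.

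I expect the principal technical obstacle to be justifying the transition from the distributional $Q$-commutation relation to the operator identity involving the Hilbert--Schmidt kernel $F$, and in particular verifying the convergence of the singular value series to the operator of interest. This I would handle by an explicit computation on each $\mathcal{H}^{\otimes n}$: the left-hand side acts there as $(M \otimes 1^{\otimes(n-1)})R_Q^{(n)}$, where $M$ is the integral operator on $\mathcal{H}$ with kernel $F$, and finite-rank approximations to $F$ in Hilbert--Schmidt norm give partial sums $\sum_{k\le N}\sigma_k\, a^+(\phi_k)a(\psi_k)$ whose action on $\mathcal{H}^{\otimes n}$ coincides with $(M_N \otimes 1^{\otimes(n-1)})R_Q^{(n)}$ and hence converges in operator norm to the same thing, uniformly in $n$.
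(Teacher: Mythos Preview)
Your argument is correct, and it takes a genuinely different and more direct route than the paper's proof. The paper establishes the same inequality by passing through the discretization lemma of Section~\ref{sec:discretization}: it replaces $a(h_i), a^+(h_i)$ by their discrete approximations $a_\epsilon(h_i), a^+_\epsilon(h_i)$ on the $q_{xy}$-Fock space, applies the \emph{discrete} commutation relation there to extract the analogue of your kernel $F$, bounds the resulting expression via $P_{Q,\epsilon}^{(n+1)}\le C(1\otimes P_{Q,\epsilon}^{(n)})$ (used twice, exactly as you do), and finally lets $\epsilon\to 0$. You instead work entirely in the continuous picture by identifying $\sum_i(a(h_i)a^+(h_i)-1)$ on $\mathcal H^{\otimes n}$ with $(M\otimes 1^{\otimes(n-1)})R_Q^{(n)}$ and then decomposing $M$ via its singular values so that the estimate reduces to the same two applications of $P_Q^{(n+1)}\le C(1\otimes P_Q^{(n)})$.

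What your approach buys: it bypasses the discretization apparatus entirely, so Proposition~\ref{prop:ancr} no longer depends on Lemma~\ref{lem:discretization} or even on the existence of the discrete model. It also makes the role of the Hilbert--Schmidt norm of $F$ transparent. What the paper's approach buys: once the discretization lemma is in hand, the discrete commutation relation is a genuine operator identity, so there is no need for your verification that the distributional relation \eqref{eqn:qcr} integrates to the operator identity $(M\otimes 1^{\otimes(n-1)})R_Q^{(n)}$; your computation of this (via $T_{k+1}^{(n+1)}(h_i\otimes f)=h_i\otimes T_k^{(n)}f$ and $l(h_i)T_1^{(n+1)}(h_i\otimes f)=(M_i\otimes 1)f$) is straightforward but does have to be done. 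One minor point to watch: the kernel you obtain from the explicit calculation is $K(x,y)=Q(x,y)h_i(x)\overline{h_i(y)}$, the transpose of your $F_i$; this is harmless for the Hilbert--Schmidt bound (and vanishes altogether for real $h_i$, as in the application to Theorem~\ref{thm:operator}), but you should record the correct kernel.
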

\begin{proof}
We will prove the first estimate; the proof of the second is analogous. It will suffice to show that for $f=\sum_{j\in J}f_{1j}\otimes\cdots \otimes f_{nj}$ with $f_{1j},\cdots, f_{nj}\in C_c^\infty(U)$,
\begin{equation*}
 \left\| \sum_{i=1}^d\left( a(h_i)a^+(h_i)-1\right)f \right\|_Q^2\le q^2C^2d\left\|f \right\|_Q^2.
\end{equation*}
To prove this result, we will make use of Lemma \ref{lem:discretization}, which implies that
in the notation of Section \ref{sec:discretization},
\begin{equation*}
 \left\| \sum_{i=1}^d\left( a(h_i)a^+(h_i)-1\right)f \right\|_Q^2=  
\lim_{\epsilon\to 0}\left\| \sum_{i=1}^d\left( a_\epsilon(h_i)a_\epsilon^+(h_i)-1\right)\sum_{j\in J}a_{\epsilon}^+(f_{1j})\cdots a_{\epsilon}^+(f_{nj})\Omega_\epsilon \right\|_{Q,\epsilon}^2,
\end{equation*}
We again choose $C=\frac{1}{1-q}$. For this choice of the constant, we have 
$P_{Q,\epsilon}^{(n)} \ge C(1\otimes P_{Q,\epsilon}^{(n-1)})$ and also 
$P_{Q,\epsilon}^{(n)} \ge C(P_{Q,\epsilon}^{(n-1)}\otimes 1)$.
We define $f_{\epsilon} = \sum_{j\in J}a_{\epsilon}^+(f_{1j})\cdots a_{\epsilon}^+(f_{nj})\Omega_\epsilon $, fix $\epsilon > 0$, and denote 
$\sum_{i=1}^d\left( a_\epsilon(h_i)a_\epsilon^+(h_i)-1\right)$ by $V_\epsilon$.
Applying the discrete commutation relations and rearranging terms,
\begin{align*}
 \left\| V_\epsilon f_{\epsilon} \right\|_{Q,\epsilon}
&=\left\| \sum_{i=1}^d\left( \left(\epsilon^j\sum_{x_1,x_2\in U_\epsilon}\overline{h_i}(x_1)h_i(x_2)a_\epsilon(e_{x_1})a_\epsilon^+(e_{x_2})\right)-1\right)f_{\epsilon} \right\|_{Q,\epsilon}\\
&=\left\| \left(\sum_{i=1}^d\sum_{x_1,x_2\in U_\epsilon}
\epsilon^j\overline{h_i}(x_1)h_i(x_2)\left(Q(x_1,x_2)a_\epsilon^+(e_{x_2})a_\epsilon(e_{x_1})+\delta_{x_1,x_2}\right)f_\epsilon\right)-df_\epsilon \right\|_{Q,\epsilon}\\
&\le \left\|\sum_{i=1}^d\sum_{\substack{x_1\in U_\epsilon\\ x_2\in U_\epsilon}}
\epsilon^j\overline{h_i}(x_1)h_i(x_2)Q(x_1,x_2)a_\epsilon^+(e_{x_2})a_\epsilon(e_{x_1})f_\epsilon\right\|_{Q,\epsilon} + \left\|\sum_{i=1}^d\left(-1+\epsilon^j\sum_{x\in U_\epsilon}|h_i(x)|^2\right)f_\epsilon \right\|_{Q,\epsilon}.
\end{align*}
Since $\|h_i\|^2 = 1$, the second term in the last line converges to $0$ as $\epsilon\to 0$, whence we need
only show that the first term has the needed bound in the limit. Denoting this term by $S_\epsilon$, we have
\begin{align*}
 S_\epsilon^2
&= \left\|\sum_{i=1}^d\sum_{x_1,x_2\in U_\epsilon}
\epsilon^j\overline{h_i}(x_1)h_i(x_2)Q(x_1,x_2)a_\epsilon^+(e_{x_2})a_\epsilon(e_{x_1})f_\epsilon\right\|_{Q,\epsilon}^2\\
&= \left\|\sum_{i=1}^d\sum_{x_1,x_2\in U_\epsilon}
\epsilon^j\overline{h_i}(x_1)h_i(x_2)Q(x_1,x_2)e_{x_2}\otimes a_\epsilon(e_{x_1})f_\epsilon\right\|_{Q,\epsilon}^2\\
&\le C\sum_{x_2\in U_\epsilon} \left\|\sum_{i=1}^d\sum_{x_1\in U_\epsilon}
\epsilon^j\overline{h_i}(x_1)h_i(x_2)Q(x_1,x_2)a_\epsilon(e_{x_1})f_\epsilon\right\|_{Q,\epsilon}^2.
\end{align*}
Here we have used the fact that $P_Q^{(n+1)} \le C (1\otimes P_Q^{(n)})$. To further simplify this bound,
we use the fact that the adjoint map is an isometry and then make use of our
choice of $C$ again:
\begin{align*}
S_\epsilon^2
&\le C\sum_{x_2\in U_\epsilon} \left\|\sum_{i=1}^d\sum_{x_1\in U_\epsilon}
\epsilon^j{h_i}(x_1)\overline{h_i}(x_2)Q(x_1,x_2)a_\epsilon^+(e_{x_1})\right\|_{Q,\epsilon}^2
\|f_\epsilon\|_{Q,\epsilon}^2 \\ 
&\le C\sum_{x_2\in U_\epsilon} \sup_{\|g_\epsilon\|_{Q,\epsilon}=1}\left\|\sum_{i=1}^d\sum_{x_1\in U_\epsilon}
\epsilon^j{h_i}(x_1)\overline{h_i}(x_2)Q(x_1,x_2)e_{x_1}\otimes g_\epsilon \right\|_{Q,\epsilon}^2\|f_\epsilon\|_{Q,\epsilon}^2 \\ 
&\le C^2\sum_{x_1, x_2\in U_\epsilon} \sup_{\|g_\epsilon\|_{Q,\epsilon}=1}\left|\sum_{i=1}^d
\epsilon^j{h_i}(x_1)\overline{h_i}(x_2)Q(x_1,x_2)\right|^2\|g_\epsilon\|_{Q,\epsilon}^2\|f_\epsilon\|_{Q,\epsilon}^2 \\
&\le C^2\sum_{x_1, x_2\in U_\epsilon} \sum_{i=1}^d  
\epsilon^{2j}{h_i}(x_1)^2\overline{h_i}(x_2)^2\left|Q(x_1,x_2)\right|^2\|f_\epsilon\|_{Q,\epsilon}^2.
\end{align*}
In arriving at the last line we have made use of the fact that the $h_i$ are supported on disjoint sets.
Since by Lemma \ref{lem:discretization} we have $\|f_\epsilon\|_{Q,\epsilon}^2\to \|f\|_Q$ as $\epsilon\to 0$, whence
\begin{equation*}
 \lim\sup_{\epsilon\to 0} S_\epsilon^2 \le \sum_{i=1}^dC^2\int\int  |Q(x_1, x_2)|^2 h_i(x_1)^2\overline{h_i}(x_2)^2\ dx_1 dx_2 \|f\|_Q^2 \le C^2q^2d.
\end{equation*}
This gives the needed result.

\end{proof}
\begin{proof}[Proof of Theorem \ref{thm:operator}]
Expanding the definition of $N_d$  we have,
\begin{equation*}
 \begin{split}
  N_d &=\sum_{i=1}^d \left(a^+(g_i)a^+(g_i)+a(g_i)a(g_i)+a^+(g_i)a(g_i)+a(g_i)a^+(g_i)\right)\\
&+\sum_{i=1}^d \left(a_r^+(g_i)a_r^+(g_i)+a_r(g_i)a_r(g_i)+a_r^+(g_i)a_r(g_i)+a_r(g_i)a_r^+(g_i)\right)\\
&-\sum_{i=1}^d \left(2a^+(g_i)a_r^+(g_i)+2a(g_i)a_r(g_i)+a^+(g_i)a_r(g_i)+a(g_i)a_r^+(g_i)\right)\\
&-\sum_{i=1}^d \left(a_r^+(g_i)a(g_i)+a_r(g_i)a^+(g_i)\right).
 \end{split}
\end{equation*}
Here we have used the fact that $a^+(g_i)a_r^+(g_i)= a_r^+(g_i)a^+(g_i)$ and likewise for the left and
right annihilation operators.

For each $i$, we denote by $D_i$ the map on $\mathcal F_Q(\mathcal H)$ given by linear extension of
$f\mapsto g_i\otimes f\otimes g_i$ for $f\in \mathcal H^{\otimes n}$. By the definition of the left
and right annihilation operators,
\begin{align*}
a(g_i)a_r^+(g_i)f &= \left(a(g_i)f\right)\otimes g_i+\mathcal L_{n+2}(T_{2}^{(n+2)}\cdots T_{n+1}^{(n+2)})D_i(f)\\
\text{and}\\
a_r(g_i)a^+(g_i)f &= g_i\otimes \left(a_r(g_i)f\right)+\mathcal R_{n+2}(T_{n}^{(n+2)}\cdots T_{1}^{(n+2)})D_i(f),
\end{align*}
for $f\in \mathcal H^{\otimes n}$, where $\mathcal R_{n+2}$ and $\mathcal L_{n+2}$ are as in Proposition
\ref{prop:contractions}. Now defining
\begin{equation*}
 B_1:=-2d+\sum_{i=1}^d (a(g_i)a^+(g_i)+a_r(g_i)a_r^+(g_i)),
\end{equation*}
we have by Proposition \ref{prop:ancr} that $\|B_1\|\le 2Cq\sqrt{d}$ on $\mathcal F_Q(\mathcal H)\ominus \mathbb C\Omega$. Define also
\begin{equation*}
 B_2:=\sum_{i=1}^d \left(\mathcal R_{n+2}(T_n^{(n+2)}\cdots T_{1}^{(n+2)})D_j(f)+ \mathcal L_{n+2}(T_2^{(n+2)}\cdots T_{n+1}^{(n+2)})D_j(f)\right).
\end{equation*}
By Proposition \ref{prop:contractions}, we have $\|B_2\|\le 2qd$. Finally letting
\begin{equation*}
 B_3:=N_d-2d-B_1+B_2,
\end{equation*}
we have by Lemma \ref{lem:bounds} that $\|B_3\| \le 14 C\sqrt{d}$. This yields an inequality of operators,
\begin{equation*}
 N_d|_{\mathcal F_Q(\mathcal H)\ominus \mathbb C\Omega} \ge 2d(1-q)-2C\sqrt{d}q-14C\sqrt{d}.
\end{equation*}
The expression on the right is positive for sufficiently large $d$.
\end{proof}

\begin{remark}
We have assumed throughout that $q:=\sup_{x,y\in U}|Q(x,y)|<1$. However, we can easily extend the construction
to the case of $q=1$. Write $U=\bigcup_{i\in I} B(x_i, r_i)$ where $B(x_i, r_i)$ denotes the open ball of 
radius $r_i$ centered at $x_i\in \mathbb R^j$. For each $N$, define $U_N:=\bigcup_{i\in I} B\left(x_i, \frac{N-1}{N}r_i\right)$.
Then 
\begin{equation*}
\sup_{x,y\in U_N}|Q(x,y)|\le \sup_{x,y\in \overline{U_N}}|Q(x,y)| < 1,
\end{equation*}
so we can define $\mathcal H_N:=L^2(U_N)$ and apply the construction to get a factor 
$\Gamma_Q(\mathcal H_N)$. Moreover, we have a natural inclusion $\Gamma_Q(\mathcal H_N)\subseteq \Gamma_Q(\mathcal H_{N+1})$, so we can define $\bigcup_{N\in\mathbb N} \Gamma_Q(\mathcal H_N)$. The Fock space $\mathcal F_Q(\mathcal H)$ 
can be constructed by the GNS construction. Finally, by choosing
the functions $g_1,g_2,\ldots$ in Theorem \ref{thm:operator} to be supported in some $U_N$, we see that
we can construct an operator as in Theorem \ref{thm:operator}, so that Corollary \ref{cor:factor} holds as well. 
\end{remark}

\bibliographystyle{plain}
\bibliography{bibliography}

\end{document}